\newcommand{\nei}[1]{\text{ne}(#1)}
\newcommand{\clo}[1]{\text{cl}(#1)}
\newcommand{\pa}[1]{\text{pa}(#1)}
\newcommand{\pas}[1]{\small\text{pa}_{#1}}
\newcommand{\amarg}[1]{\mathcal{I}_{#1}}
\newcommand{\df}{\textit{df}}
\newcommand{\dfh}{\textit{df}^{\hspace{1.5pt}h}}
\newcommand{\sse}{\textit{SSE}}
\DeclareRobustCommand\discdot{\tikz{\draw[fill = black,line width=0.5pt]  circle(0.6ex);}}
\DeclareRobustCommand\contdot{\tikz{\draw[fill = white,line width=0.5pt]  circle(0.6ex);}}
\newtheorem{proposition}[subsection]{Proposition}
\title{Detecting Outliers in High-dimensional Data with Mixed Variable Types using Conditional Gaussian Regression Models}
\author{
  Mads Lindskou \\
  Department of Mathematical Sciences\\
  Aalborg University\\
  \texttt{mlindsk@math.aau.dk} \\
  \And
  Torben Tvedebrink \\
  Department of Mathematical Sciences\\
  Aalborg University\\
  \texttt{tvede@math.aau.dk} \\
  \And
  \hspace{.7cm}Poul Svante Eriksen \\
  \hspace{.7cm}Department of Mathematical Sciences\\
  \hspace{.7cm}Aalborg University\\
  \hspace{.7cm}\texttt{svante@math.aau.dk} \\
  \And
  Niels Morling \\
  Section of Forensic Genetics, \\Department of
  Forensic Medicine, \\Faculty of Health and Medical Sciences,\\
  University of Copenhagen
  Aalborg University\\
  \texttt{niels.morling@sund.ku.dk}
}
\begin{document}
\maketitle
\begin{abstract}
Outlier detection has gained increasing interest in recent years, due to newly emerging technologies and the huge amount of high-dimensional data that are now available. Outlier detection can help practitioners to identify unwanted noise and/or locate interesting abnormal observations. To address this, we developed a novel method for outlier detection for use in, possibly high-dimensional, datasets with both discrete and continuous variables. We exploit the family of decomposable graphical models in order to model the relationship between the variables and use this to form an exact likelihood ratio test for an observation that is considered an outlier. We show that our method outperforms the state-of-the-art Isolation Forest algorithm on a real data example.
\end{abstract}

\keywords{outlier detection, mixed graphical models, likelihood ratio test}

\section{Introduction}
\label{sec:introduction}

Outlier detection is an important learning paradigm and has drawn significant attention within the research community, as shown by the increasing number of publications in this field. An outlier in a data set is an observation that, for some reason, does not share the same characteristics as the majority (there may be more than one outlier) of all other observations. An outlier may be the most interesting observation in some situations. In other situations, it may be regarded as extreme noise, and it may be appropriate to remove it from the data. There is no clear mathematical definition of an outlier in the literature. \cite{hawkins1980identification} gave the following definition: ``an observation which deviates so much from the other observations in the data-set as to arouse suspicions that it was generated by a different mechanism''. In \cite{lindskououtlier}, this definition, was adapted by specifying a statistical hypothesis of an outlier being distributed differently than all other observations for discrete data sets. In this paper, we extend this definition to capture outliers in data sets with variables of mixed types, i.e.\ both discrete and continuous variables.

Most research on outlier detection has been focused on the two pure
cases where all variables are either discrete or continuous, while
little research has been done in the mixed case for high-dimensional
data sets \citep{garchery2018influence}. State-of-the art algorithms
include the Isolation Forest (iForest) algorithm
\citep{liu2008isolation} which has gained a lot of attention in recent
years. Many software packages implement iForest. In particular, the
procedure is implemented in the major data science languages, such as
R and Python, making it readily available to practitioners. Although
iForest was designed for outlier detection in the pure continuous
case, it is frequently used in the mixed case where the discrete
variables are transformed into continuous variables. However, the transformation may induce an unwanted ordering of or distance between the levels of the discrete variables. Hence, the information content of
the data may be altered by the transformation, but
nonetheless iForest is used in many papers on outlier detection 
with mixed data, and the performance is most often excellent, see
e.g.\ \cite{eiras2019large, xu2019mix, aryal2016revisiting,
  garchery2018influence} and \cite{aryal2019improved}. In the review
papers by \cite{domingues2018comparative} and \cite{emmott2015meta}, iForest is recommended as the best overall outlier detection
procedure and is recommended in production environments. In
Section~\ref{sec:pm}, we show that our method outperforms iForest as an
outlier detection method in the mixed case, when data includes a large
number of discrete variables. This is in contradiction the findings in the aforementioned papers.

We propose a novel probabilistic outlier detection method that relies
on the family of decomposable mixed graphical models, see e.g.\
\cite{lauritzen1996graphical}. Graphical models is a family of
statistical models, for which the dependencies between variables can
be depicted or read off from a graph called the interaction
graph. They are composed of a set of random variables (possibly both
continuous and discrete) and an interaction graph, for which each of
the vertices represents one of the random variables. In essence, a
graphical model encodes the conditional independencies between the
random variables. By imposing the graphs to be decomposable, the
likelihood function is ensured to be on closed form, which enables us
to express what is meant by an outlier using an exact likelihood ratio
test (LRT). Furthermore, the components of the proposed LRT are
composed of local information from the graph in terms of cliques
yielding a way to explore which variables have the largest impact of
the declaration of an observation as an outlier.

The likelihood function corresponds to a multivariate multiple regression model over the continuous variables, however due to local independencies, it is possible to restrict attention to simple multiple linear regressions with both continuous and discrete explanatory variables. In the pure discrete case, the method coincide with the one given in \cite{lindskououtlier} which will be apparent in Section \ref{sec:od}. In addition, the method also handles the pure continuous case which is shown to be equivalent to a sum of studentized residuals over local structures in the graph.

The LRT relies on simulating observations from the model, as in \cite{lindskououtlier}. However, in the mixed case we show that it is redundant to simulate the continuous counterpart of a simulated discrete observation. It turns out that the continuous contribution to the likelihood ratio can be drawn from a beta distribution once the discrete counterpart is known.
  
% We provide an R software package of our novel method called ODMGM along with all code snippets used to generate all figures and analyses carried out in this paper, see \cite{mlindsk_2020_3999522}.

The rest of the paper is organised as follows. Section~\ref{sec:dmg} reviews the definitions of graphical models needed together with a useful proposition. Section~\ref{sec:ct} is devoted to the notation and likelihood function needed to arrive at the exact likelihood ratio test given in Section~\ref{sec:od}. Section~\ref{sec:outlier_test} summaries the proposed outlier detection method, ODMGM, in a pseudo algorithm providing a very detailed explanation of the steps to carry out. In Section~\ref{sec:real}, a real data set with mixed variable types is analysed, and the results are compared to those obtained with iForest.

\section{Decomposable Mixed Graphs}
\label{sec:dmg}

We focus on undirected graphs, i.e.\ graphical models for which the edges in the interaction graph are not directed. Such models are also known as Markov random networks. In the following, we introduce notation and concepts that can be found in e.g. \cite{lauritzen1996graphical}. An undirected mixed graph, $G = (V, E)$, is a pair consisting of a set of vertices $V$ and a set of edges $E = \{ \{u, v\} \mid u,v \in V, u \neq v \}$. Furthermore, $V$ is the union of $\Delta$, the discrete variables, and $\Gamma$, the continuous variables, where $|\Delta| = s, |\Gamma| = r$ and $m = |V| = s+r$. In the figures, we use circles to represent continuous variables and dots (discs) to represent discrete variables. 

A mixed graph is triangulated if it has no cycle of length $\geq 4$ without a chord. Mixed graphs are said to be decomposable if they are triangulated and do not contain any path between two non-adjacent discrete vertices passing through continuous variables only. Such paths are also called forbidden paths. For a decomposable mixed graph, $G$, with vertex set $V$, it holds that the sub-graph $G_{A} = (A, \{ \{ u,v\} \mid u,v \in A \, u \neq v\})$ is also decomposable for all subsets $A \subseteq V$. The subset $A$ is called a clique if $G_{A}$ is a complete graph, i.e. any two vertices are adjacent. Define the star graph of $G = (V,E)$ as
\[
  G^{\star} = (V^{\star}, E^{\star}) = \bigl(V \cup \{\star\}, E \cup \{ \{d, \star\} \mid d\in \Delta \}\bigr).
\] 
That is, $G^{\star}$ is the graph, in which $V$ is extended with the
$\star-$node, and all discrete vertices are connected to this. \cite{leimer1988triangulated} showed, that a graph is decomposable if
and only if the corresponding star graph is triangulated. It can,
therefore, be checked if a graph $G$ is decomposable by using the
maximum cardinality search (MCS) algorithm \citep{yannakakis1981computing} on $G^{\star}$.

Let $C_{1}, C_{2}, \ldots, C_{k}$ be a sequence of the cliques in an undirected graph, and define for $j = 1, 2 \ldots, k$
\begin{itemize}
\item $H_{j} = C_{1} \cup \cdots \cup C_{j}$,
\item $S_{j} = C_{j} \cap H_{j-1}$ and
\item $R_{j} = C_{j} \setminus H_{j-1}$,
\end{itemize}
where we define $H_{0}$ as the empty set. These sets are also referred
to as the histories, separators and residuals, respectively. The sequence is then said to be
perfect if the following conditions hold:
\begin{enumerate}[label={(\textsl{\alph*})}]
\item for all $j > 1$, there exist an index $i < j$ such that
  $S_{j} \subseteq C_{i}$,\label{enum:1}
\item all separators are complete, and
\item either $R_{j} \subseteq \Gamma$ or $S_{j} \subseteq \Delta$ for
  all $j > 1$.\label{enum:3}
\end{enumerate}
Condition \ref{enum:1} is known as the running intersection property, and
condition \ref{enum:3} ensures that no forbidden path exists. Denote by
$\nei{v} = \{ u \mid \{u, v\} \in E \}$ and
$\clo{v} = \nei{v} \cup \{v\}$ the neighbours and closure of the vertex
$v \in V$, respectively, and define
\begin{align}\label{eq:Bi}
B(v_{j}) = \clo{v_{j}} \cap \{ v_{1}, v_{2}, \ldots, v_{j} \}, \quad j > 1.
\end{align}
If the sets in \eqref{eq:Bi} form a perfect sequence of sets, the
sequence of vertices, $v_{1}, v_{2}, \ldots, v_{m}$, is said to be a
perfect numbering of the vertices. A graph is decomposable if and only
if, the vertices admit a perfect numbering and/or the cliques of the
graph can be perfectly numbered to form a perfect sequence
\citep{lauritzen1996graphical}. The cliques are then said to have the
running intersection property (RIP).

% \cite{leimer1988triangulated} showed that if $\star, v_{1}, v_{2}, \ldots, v_{m}$ is a perfect numbering of the vertices in $G^{\star}$ it holds, that $v_{1}, v_{2}, \ldots, v_{m}$ is a perfect numbering of the vertices in $G$.

We use the notation $\pa{v} := B(v) \setminus\{v\}$ to denote the
parents of $v$ defined as the preceding numbered vertices of $v$ that
are also neighbours of $v$. The following result appears in
\citet[][p. 18]{lauritzen1996graphical} as a remark. However, this
result is vital for the model assumptions in Section~\ref{sec:ct}, and we therefore give a concise formal proof.

\begin{proposition}\label{prop:cont_before_disc}
  For decomposable mixed graphs, a perfect numbering of the vertices
  can be chosen such that the discrete variables are numbered before
  the continuous ones.
\end{proposition}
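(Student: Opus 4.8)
The plan is to argue by induction on $r = |\Gamma|$, the number of continuous vertices, peeling off one continuous vertex at a time. When $r = 0$ the graph $G$ is decomposable and purely discrete, so by the characterisation recalled above it admits a perfect numbering, which is vacuously of the required form. So suppose $r \geq 1$. The heart of the argument is the following claim, which I isolate because it carries the real content: \emph{every decomposable mixed graph with $\Gamma \neq \emptyset$ has a continuous simplicial vertex, i.e.\ some $\gamma \in \Gamma$ for which $\nei{\gamma}$ is complete.}

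Granting the claim, choose such a $\gamma$ and let it be the last vertex. The induced subgraph $G_{V \setminus \{\gamma\}}$ is again a decomposable mixed graph, by the hereditary property stated above, with $r - 1$ continuous vertices and all $s$ discrete ones, so the induction hypothesis gives a perfect numbering $v_{1}, \ldots, v_{m-1}$ of it in which every discrete vertex precedes every continuous one. Put $v_{m} = \gamma$. Because $\gamma$ is numbered last, the sets $B(v_{j})$ for $j < m$ are unchanged, while at the last step $S_{m} = \nei{\gamma}$ is complete since $\gamma$ is simplicial, $R_{m} = \{\gamma\} \subseteq \Gamma$ so condition~\ref{enum:3} is satisfied, and the clique $\nei{\gamma}$ lies inside one of the earlier $B$-sets so the running intersection property~\ref{enum:1} holds. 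Hence $v_{1}, \ldots, v_{m}$ is a perfect numbering of $G$, and since all discrete vertices occur among $v_{1}, \ldots, v_{m-1}$ it has the desired property. This reduces the proposition to the claim.

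To prove the claim I would pass to the star graph. By \citet{leimer1988triangulated}, $G^{\star}$ is triangulated, and it is not complete because $\Gamma \neq \emptyset$ and no vertex of $\Gamma$ is adjacent to $\star$. Fix any $\gamma_{0} \in \Gamma$ and let $S$ be a minimal $(\star, \gamma_{0})$-separator in $G^{\star}$; by standard properties of triangulated graphs $S$ is complete, with $\star, \gamma_{0} \notin S$ \citep{lauritzen1996graphical}. Let $C$ be the connected component of $G^{\star} \setminus S$ containing $\gamma_{0}$, so $C \neq \emptyset$ and $\star \notin C$. The subgraph of $G^{\star}$ induced by $C \cup S$ is triangulated, contains $S$ as a clique, and has $C \neq \emptyset$, so it possesses a simplicial vertex $x$ lying in $C$: if it is complete any vertex of $C$ works, and otherwise it has two non-adjacent simplicial vertices, which cannot both belong to the clique $S$, so one of them lies in $C$. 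Since $C$ is an entire component of $G^{\star} \setminus S$, every neighbour of $x$ in $G^{\star}$ already lies in $C \cup S$, so $x$ is in fact simplicial in $G^{\star}$. Finally, $x$ cannot be discrete, for a discrete vertex is adjacent to $\star$, which would force $\star \in C \cup S$, contrary to $\star \notin C \cup S$. Therefore $x \in \Gamma$, and as continuous vertices have the same neighbours in $G$ and in $G^{\star}$, $\nei{x}$ is complete in $G$, which proves the claim.

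The main obstacle is the claim, and within it the delicate point is guaranteeing that the simplicial vertex we extract from $G^{\star}$ is continuous; the device for this is to separate $\star$ from some continuous vertex by a complete separator and to take a simplicial vertex on the far side, which is then non-adjacent to $\star$ and hence not discrete. The induction on $|\Gamma|$ and the verification of conditions~\ref{enum:1}--\ref{enum:3} when $\gamma$ is appended are routine.
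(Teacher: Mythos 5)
Your proof is correct, but it takes a genuinely different route from the paper's. The paper starts from an arbitrary perfect numbering (which exists by decomposability) and observes that whenever a discrete vertex $\delta$ is numbered after a continuous vertex $\gamma$, condition~\ref{enum:3} forces $\gamma \notin \pa{\delta}$ (a discrete vertex cannot have a continuous parent) while $\delta \notin \pa{\gamma}$ holds by the ordering, so transposing the two leaves the relevant sets $B(\cdot)$ unchanged; finitely many such swaps sort the discrete vertices to the front. You instead construct the numbering from scratch by induction on $|\Gamma|$, which requires the nontrivial lemma that every decomposable mixed graph with $\Gamma \neq \emptyset$ has a \emph{continuous} simplicial vertex, proved via the star graph, a complete minimal $(\star,\gamma_{0})$-separator, and Dirac's two-simplicial-vertices theorem. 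Your argument is sound: the one step left implicit, that the complete set $\nei{\gamma}$ lies in some earlier $B(v_{i})$, follows by taking $v_{i}$ to be its last-numbered member. What your approach buys is an explicit continuous elimination ordering and a self-contained proof of the simplicial-vertex fact (essentially re-deriving Leimer's result), and it sidesteps the point the paper's proof glosses over, namely that the swap argument is cleanest for adjacently numbered pairs since transposing distant vertices could alter intermediate $B$-sets. What it costs is length and the importation of considerably more chordal-graph machinery than the two-line swap requires.
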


\begin{proof}
  Let $\delta$ be a discrete vertex numbered after the continuous
  vertex $\gamma$. By definition, $\gamma \notin \pa{\delta}$ and
  $\delta \notin \pa{\gamma}$. Hence, interchanging $\delta$ and
  $\gamma$ in the perfect sequence would leave $B(\gamma)$ and
  $B(\delta)$ unchanged and, thus, complete and satisfy
  $B(\delta) \subseteq \Delta$. After a suitable number of such
  interchanges, all continuous vertices will be preceded by the
  discrete vertices.
\end{proof}

% Notice that Proposition~\ref{prop:cont_before_disc} gives an algorithm for computing a perfect numbering with discrete variables before the continuous ones. However, it is not known in advance how many such interchanges are needed. The procedure is, however, equivalent to use MCS on the star graph $G^{\star}$, starting with the $\star-$vertex and each time a continuous variable has maximal weight and must be chosen, choose instead a vertex with maximal weight among the remaining discrete variables. We refer to this version of MCS as MCS-DF (Maximum Cardinality Search Discrete First).

Consider the mixed graph in Figure~\ref{fig:graph_example} (left), where $V = \{a,b,c,d,e,f\}$, and $$E = \bigl\{ \{a,b\}, \{b,c\}, \{b,d\}, \{c,d\}, \{c,e\}, \{d,e\} \bigr\},$$ with $\Delta = \{b, c, f\}$ and $\Gamma = \{a, d, e\}$. The corresponding star graph is depicted in Figure~\ref{fig:graph_example} (right), where it can be seen that the graph is triangulated and hence decomposable. A perfect numbering of the vertices is given as $c,b,f,d,e,a$.

\begin{figure}[h!]
  \centering
  \begin{minipage}{0.3\textwidth}
    \centering
    \begin{tikzpicture}
    % Space
    \tikzmath{\eps = 0.6;}
    \tikzmath{\sep_ = .3cm;}
    \tikzset{dot/.style={thick, circle, draw = black, minimum size = \sep_, inner sep = 0, fill = black}}
    \tikzset{cir/.style={thick, circle, draw = black, minimum size = \sep_, inner sep = 0}}

    \node [dot, label = below:{$f$}] (f) at (-4*\eps, -\eps){};
    \node [cir, label = above:{$a$}] (a) at (-4*\eps,  \eps){};
    \node [dot, label = above:{$b$}] (b) at (-3*\eps,  \eps){};
    \node [dot, label = below:{$c$}] (c) at (-2*\eps, -\eps){};
    \node [cir, label = above:{$d$}] (d) at (-1*\eps,  \eps){};
    \node [cir, label = below:{$e$}] (e) at (0*\eps,  -\eps){};

    \draw [thick, draw=black] (a) -- (b);
    \draw [thick, draw=black] (b) -- (c);
    \draw [thick, draw=black] (b) -- (d);
    \draw [thick, draw=black] (c) -- (d);
    \draw [thick, draw=black] (c) -- (e);
    \draw [thick, draw=black] (d) -- (e);
  \end{tikzpicture}
  % \captionsetup{justification=centering}
  % \captionof*{figure}{(1)}
\end{minipage}
\begin{minipage}{0.30\textwidth}
  \centering
  \begin{tikzpicture}
    % Space
    \tikzmath{\eps = 0.6;}
    \tikzmath{\sep_ = .3cm;}
    \tikzset{dot/.style={thick, circle, draw = black, minimum size = \sep_, inner sep = 0, fill = black}}
    \tikzset{cir/.style={thick, circle, draw = black, minimum size = \sep_, inner sep = 0}}
    \tikzset{str/.style={thick, draw = white, minimum size = \sep_, inner sep = 0}}

    \node [dot, label = below:{$f$}] (f) at (-4*\eps, -\eps){};
    \node [cir, label = above:{$a$}] (a) at (-4*\eps, \eps){};
    \node [dot, label = above:{$b$}] (b) at (-3*\eps,  \eps){};
    \node [cir                     ] (s) at (-3*\eps,  -\eps){$\star$};
    \node [dot, label = below:{$c$}] (c) at (-2*\eps, -\eps){};
    \node [cir, label = above:{$d$}] (d) at (-1*\eps,  \eps){};
    \node [cir, label = below:{$e$}] (e) at (0*\eps,  -\eps){};

    \draw [thick, draw=black] (a) -- (b);
    \draw [thick, draw=black] (b) -- (s);
    \draw [thick, draw=black] (f) -- (s);
    \draw [thick, draw=black] (c) -- (s);
    \draw [thick, draw=black] (b) -- (c);
    \draw [thick, draw=black] (b) -- (d);
    \draw [thick, draw=black] (c) -- (d);
    \draw [thick, draw=black] (c) -- (e);
    \draw [thick, draw=black] (d) -- (e);
  \end{tikzpicture}
  % \captionsetup{justification=centering}
  % \captionof*{figure}{(2)}
\end{minipage}
\caption{\label{fig:graph_example}A decomposable mixed graph (left) and
  its corresponding star graph (right), where \discdot \ represents
  discrete variables and \contdot \ represents continuous variables,
  respectively.}
\end{figure}
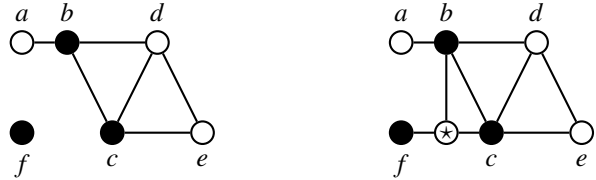

\section{Notation and the Likelihood Function}
\label{sec:ct}
Let $I$ be a $s-$dimensional discrete random vector and $Y$ a $r-$dimensional real random vector. A realised value of the random vector $X = (I, Y)$ is denoted $x = (i, y)$, where $i = (i_{d})_{d\in \Delta}$ is a tuple of discrete outcomes, also referred to as cell $i$, and $y = (y_{\gamma})_{\gamma\in \Gamma}$ is a real-valued vector. The state space of $I$ is denoted as $\mathcal{I} = \times_{d\in\Delta}\amarg{d}$, where $\amarg{d}$ is the level set of $d$; hence $i \in \amarg{}$. Marginal vectors are written $x_{a} = (i_{a \cap \Delta}, y_{a \cap \Gamma})$, where $i_{a \cap \Delta}$ and $y_{a\cap \Delta}$ are sub-vectors restricted to the sets $a \cap \Delta$ and $a \cap \Gamma$, respectively. We usually use the shorthand notations $i_{a} := i_{a \cap \Delta}$ and $y_{a} := y_{a\cap \Gamma}$. The level set for the $a-$marginal $i_{a}$ is denoted $\amarg{a}$; hence $i_{a} \in \amarg{a} = \times_{d\in a} \amarg{d}$. If the vertices are numbered, we write \[x = (x_{1}, x_{2}, \ldots, x_{m}) = (i_{1}, i_{2}, \ldots, i_{s}, y_{1}, y_{2}, \ldots, y_{r}).\] 

The observed counts in cell $i$ is denoted $n(i)$, and the probability of an observation falling in cell $i$ is $p(i)$. The $a-$marginal table is then defined by the counts
\[
  n_{a}(i_{a}) = \sum_{j: j_{a} = i_{a}} n(j),
\]
and similarly for the marginal probabilities $p_{a}$. For the empty set, $a = \emptyset$, we write $n(i_{\emptyset}) = |n|$, where $|n| = \sum_{i\in \amarg{}} n(i)$ is the total number of counts.

We assume that $I$ has probability mass function $p$, and that $Y$, given $I = i$, is a multivariate Gaussian model with mean and variance depending on cell $i$. A distribution of this form is called an inhomogeneous conditional Gaussian (CG) distribution \citep{lauritzen1989}. The joint density is written as
\begin{align}\label{eq:CG}
  f(x) = f(i,y) = f(y\mid i) \times p(i).
\end{align}
If the variance is assumed to be independent of the discrete
variables, the model is referred to as a homogeneous CG
distribution. The inhomogeneous case is treated below (with
some additional details in Appendix~\ref{sec:var_est}). The homogeneous case is discussed in Section~\ref{sec:hom}. Given a perfect numbering of the vertices, Proposition~\ref{prop:cont_before_disc} allows the
following factorisation of the joint density
\begin{align}\label{eq:CGR}
  f(x) =   p(i) \times \prod_{j=1}^{r}f(y_{j} | x_{\pas{j}}),
%  \qquad \text{with $\pas{j} := \pa{y_{j}}$}
\end{align}
with $\pas{j} := \pa{y_{j}}$. The univariate conditional densities in the product \eqref{eq:CGR} are still Gaussian with a conditional mean depending on the parents. Such models are called CG regressions, see e.g.\ \cite{edwards2012introduction}. One particular useful feature of decomposable models is, that the maximum likelihood estimates of the parameters in \eqref{eq:CG} can be obtained from those in \eqref{eq:CGR}. We now derive the maximised likelihood of \eqref{eq:CGR}, which is then exploited in Section~\ref{sec:od} in order to arrive at a test statistic to be used in connection with outlier detection.
% Edwards p. 98

Suppose we have a sample of i.i.d.\ observations, $x^{\ell} = (i^{\ell}, y^{\ell})$ for $\ell=1, 2, \ldots, |n|$, from a decomposable mixed graphical model, and let  $\bm{x} = (x^{1}, x^{2}, \ldots, x^{|n|})$ be the vector of observations. The likelihood of the $j$'th Gaussian factor then takes the form
\begin{align}\label{eq:likelihood_j}
  L(\theta_{j}; \bm{x}) &=  \prod_{\ell = 1}^{|n|} f(y_{j}^{\ell} \mid x_{\pas{j}}^{\ell}) =  \biggl( \prod_{\ell = 1}^{|n|} \sigma^{2}(i_{\pas{j}})^{-1/2} \biggr) \times \exp\biggl\{ -\frac{1}{2}\sum_{\ell = 1}^{|n|}\sigma^{-2}(i_{\pas{j}})(y_{j}^{\ell} - \mu(x_{\pas{j}}^{\ell}))^{2} \biggr\},
\end{align}
where $\mu(x_{\pas{j}})$ and $\sigma^{2}(i_{\pas{j}})$ are the conditional variance and mean of $Y_{j}$ given $X_{\pas{j}} = x_{\pas{j}}$, respectively, and $\theta_{j}$ is the set of parameters. Note, that the variances only depend on the cell values. From here, we simply write $\mu_{j}(x) : = \mu(x_{\pas{j}})$ and $\sigma_{j}(i):=\sigma(i_{\pas{j}})$ to ease notation. The means are assumed to have linear parameterisations of the form
\begin{align*}
  \mu_{j}(x) = \alpha_{j}(i) + \beta_{j}^{T}(i)y_{\pas{j}}, \quad \text{for } i \in \amarg{\pas{j}},
\end{align*}
where $\alpha_{j}$ is a real-valued function of the cells, and $\beta_{j}$ is a real-valued vector function of the cells with dimension $|\pas{j} \cap \Gamma|$. Define the subset of observations in cell $i_{a}$ by $\eta(a) := \{\ell \mid i_{a}^{\ell} = i_{a} \}$. Then, the likelihood in \eqref{eq:likelihood_j} can be written as the product of simple Gaussian likelihoods
\begin{align*}
  L(\theta_{j}; \bm{x}) &= \biggl( \prod_{i \in \amarg{\pas{j}}}\sigma_{j}^{2}(i)^{-n_{\pas{j}}(i)/2} \biggr) \times \exp\biggl\{ -\frac{1}{2}\sum_{i \in \amarg{\pas{j}}} \sigma_{j}^{-2}(i) \sum_{k\in\eta(\pas{j})}(y_{j}^{k} - \mu_{j}(x^{k}))^{2} \biggr\} \\
  &= \prod_{i \in \amarg{\pas{j}}} \biggl ( \sigma_{j}^{2}(i)^{-n_{\pas{j}}(i)/2} \times \exp\biggl\{ -\frac{1}{2\sigma_{j}^{2}(i)}\sum_{k\in\eta(\pas{j})}(y_{j}^{k} - \mu_{j}(x^{k}))^{2}  \biggr\} \biggr),
\end{align*}
implying that the sum of squares depends on a particular cell as $x^{k} = (i^{k}, y^{k})$. The set of parameters can be written as $\theta_{j} = \cup_{i \in \mathcal{I}_{\pas{j}}} \{ \alpha_{j}(i), \beta_{j}(i), \sigma_{j}^{2}(i)\}$. When $\pas{j} \cap \Delta = \emptyset$, we define $\mathcal{I_{\emptyset}}$ as the empty set and $\eta(\emptyset) := \{1,2\ldots, |n|\}$. In this case, the likelihood reduces to the ordinary Gaussian likelihood, where the mean only depends on continuous variables, and the variance is homogeneous. That is,
\begin{align*}
  L(\theta_{j}; \bm{x}) = (\sigma_{j}^{2})^{-|n|/2} \times \exp \biggl\{ -\frac{1}{2\sigma_{j}^{2}}\sum_{\ell = 1}^{|n|} (y_{j}^{\ell} - \mu_{j}(y^{\ell}))^{2} \biggr\},
\end{align*}
where $\theta_{j} := \{ \alpha_{j}, \beta_{j}, \sigma_{j}^{2} \}$. %%
% Let $n$ be the table of observed counts, then the likelihood of the
% pure discrete factor, where $p = \{p(i)\}_{i\in\mathcal{I}}$, is
% denoted by
For the pure discrete factors, the likelihood is denoted by
\begin{align}\label{eq:discrete_likelihood}
L(p; n) = \prod_{i\in \mathcal{I}}p(i)^{n(i)}, %%.
\end{align}
where $n$ is the table of counts and $p = \{p(i)\}_{i\in\mathcal{I}}$. Hence, the complete likelihood is given by
\begin{align}\label{eq:complete_likelihood} 
  L(\theta; \bm{x}) = L(p; n) \prod_{j=1}^{r} L(\theta_{j}; \bm{x}) ,
  % L(\bm{x}) = \prod_{s \in q_{1}} L_{s}^{(1)}(\bm{x}) \times \prod_{r \in q_{2}} L_{r}^{(2)}(\bm{x}) \times L(\bm{i}).
\end{align}
where $\theta = \cup_{j=1}^{r} \{ \theta_{j} \} \cup \{p\}$. Finally, using standard results for linear normal models, the maximum of the likelihood takes the form
\begin{align}\label{eq:max_likelihood}
  L(\hat \theta; \bm{x}) &= \prod_{j = 1}^{r} \biggl( \prod_{i \in \amarg{\pas{j}}}  \hat\sigma_{j}^{2}(i)^{-n_{\pas{j}}(i)/2}  \times \exp \bigl\{-n_{\pas{j}}(i)/2 \bigr\} \biggr) \times \prod_{i \in \amarg{}}\hat p(i)^{n(i)},
\end{align}
where
\begin{align*}
  \hat\sigma_{j}^{2}(i) = \frac{1}{n_{\pas{j}}(i)} \sum_{k\in\eta(\pas{j})}(y_{j}^{k} - \hat \mu_{j}(x^{k}))^{2} \qquad \text{and} \qquad \hat \mu_{j}(x^{k}) = \hat \alpha_{j}(i^{k}) + \hat \beta_{j}^{T}(i^{k})y_{\pas{j}}^{k}, \quad \text{for } i \in \amarg{\pas{j}},
\end{align*}
and the linear parameters are estimated by ordinary least squares. Let $C_{1}, C_{2}, \ldots, C_{K}$ be a sequence of cliques in $G_{\Delta}$ satisfying the RIP ordering. It can then be shown \citep{lauritzen1996graphical} that
\begin{align*}
  \hat p(i) = \frac{1}{|n|} \frac{\prod_{k=1}^{K}n_{C_{k}}(i_{C_{k}})}{\prod_{k=2}^{K}n_{S_{k}}(i_{S_{k}})}, \quad \text{for } i\in \mathcal{I},
\end{align*}
which is the maximum likelihood estimates of \eqref{eq:discrete_likelihood} as also exploited in the outlier detection model given in \cite{lindskououtlier}.

\section{The Null Hypothesis and Deviance Test Statistic}
\label{sec:od}
We aim to test if the observation $z = (i^{0}, y^{0}) := x^{|n|}$ is
an outlier, i.e.\ it deviates significantly from all other
observations. Suppose that $i^{0}$ is an observation sampled from a
distribution, $q$, different from $p$, the distribution of
$i^{1}, i^{2}, \ldots, i^{|n|-1}$ and that
$Y_{j}^{0} \mid Z_{\pas{j}} = (i^{0}_{\pas{j}}, y^{0}_{\pas{j}}) \sim
N(\lambda_{j}^{0}, \sigma_{j}^{2}(i^{0}))$. Then, the null hypothesis
takes the compound form
\begin{align*} 
H_{0}: \bigl\{\lambda_{j}^{0} = \alpha_{j}(i^{0}) + \beta_{j}(i^{0})^{T} y_{\pas{j}}^{0}, \ j = 1, 2, \ldots, r\bigr\} \quad \wedge \quad \bigl\{q = p\bigr\}.
\end{align*}

Let
$E_{0,j}:= \bigl\{\alpha_{j}(i), \beta_{j}(i);
i\in\mathcal{I}_{\pas{j}}\bigr\}$, and define the set of mean parameters
under $H_{0}$ as $E_{0} = \cup_{j=1}^{r}E_{0,j}$. The set of mean
parameters under the alternative hypothesis is then given by
$E = \cup_{j=1}^{r} E_{j}$, where
$E_{j} = E_{0,j} \cup \{\lambda_j\}$ and $\lambda_j$ is a
single parameter describing the conditional mean of $Y^{0}_{j}$ given
$Z_{\pas{j}} = (i^{0}_{\pas{j}}, y^{0}_{\pas{j}})$. Hence, under the alternative hypothesis, the mean $\lambda_j$ of $Y_{j}^{0}$ is not restricted and is free to vary, and hence
$\hat \lambda_j = y_{j}^{0}$. Let
\[
  \theta_{0} = E_{0} \cup \{p\} \cup \{\sigma_{j}^{2}(i)\}_{j=1,2,\ldots,r, \ i\in\mathcal{I}_{\pas{j}}} \quad \text{and} \quad \theta = E \cup \{q\} \cup \{\sigma_{j}^{2}(i)\}_{j=1,2,\ldots,r, \ i\in\mathcal{I}_{\pas{j}}}.
\]
The likelihood ratio is then given by
\begin{equation*}
  LR(z) = \frac{L(\hat \theta_{0}; \bm{x})}{L(\hat \theta; \bm{x})}.
\end{equation*}
Using \eqref{eq:max_likelihood}, we obtain
\begin{equation*}
  LR(z) = \prod_{j = 1}^{r} \prod_{i \in \amarg{\pas{j}}} \biggl(\frac{\hat\sigma_{j}^{2}(i)}{\hat \sigma_{j,0}^{2}(i)} \biggr)^{n_{\pas{j}}(i)/2} \times \frac{L(\hat p;n)}{L(\hat q;n)}% , \\  &=
  = \prod_{j = 1}^{r} \biggl(\frac{\hat\sigma_{j}^{2}(i^{0})}{\hat \sigma_{j,0}^{2}(i^{0})} \biggr)^{n_{\pas{j}}(i_{\pas{j}}^{0})/2} \times \frac{L(\hat p;n)}{L(\hat q;n)},
\end{equation*}
by exploiting that the two variance estimates coincide in all cells
but $i_{\pas{j}}^{0}$. Further define
\begin{align}\label{eq:Qj}
 Q_{j} := \hat\sigma_{j}^{2}(i^{0}) / \hat \sigma_{j,0}^{2}(i^{0}),
\end{align}

and let the degrees of freedom $\df_{j} = n_{\pas{j}}(i^{0}_{\pas{j}}) - |\pas{j} \cap \Gamma| - 1$. Using that $E_{0,j}$ and $E_{j}$ differ by exactly one parameter, $\lambda_{j}$, together with Cochran's theorem \citep{cochran1934distribution}, it then follows that
\begin{align}\label{eq:Qj_dist}
  Q_{j} \sim \text{Beta}(\df_{j}/2, 1/2), \quad \text{for } n_{\pas{j}}(i^{0}_{\pas{j}}) > |\pas{j} \cap \Gamma| + 1. 
\end{align}
The likelihood ratio for the pure discrete part,
$Q_{D} := L(\hat p;n) / L(\hat q;n)$, was investigated by
\cite{lindskououtlier}: Given a RIP ordering
$C_{1}, C_{2}, \ldots, C_{K}$ of the cliques in $G_{\Delta}$, it was shown
that
\begin{align}\label{eq:disc_likelihood}
  -2\log Q_{D}(z) = -2\biggl ( & \sum_{k=1}^{K}H(n_{C_k}(i_{C_k}^{0})) -\sum_{k=2}^{K}H(n_{S_k}(i_{S_k}^{0})) - H(|n|) \biggr),
\end{align}
where $H(x) := G(x-1) - G(x)$ and
\begin{align*}
  G(x) = \begin{cases}
    0        & \mbox{ if } x \leq 0 \\
    x\log(x) & \mbox{ if } x > 0.
    \end{cases}
\end{align*}
% It should be noted here, that $H$ is not defined for $x < 1$, however since $i^{0}$ is appended to $n$ under $H_{0}$, \eqref{eq:disc_likelihood} is well defined.

The total deviance test statistic for testing $H_{0}$ is therefore given by
\begin{align}\label{eq:dev}
 D(z) := -2\log LR(z) = -\sum_{j = 1}^{r}n_{\pas{j}}(i^{0}_{\pas{j}})\log Q_{j} -2\log Q_{D}(z).
\end{align}

The following result state, that the quantities in \eqref{eq:Qj} can be calculated independently when $i^{0}$ is known.

\begin{proposition}\label{prop:qj_indep}
  The quantities $Q_{1}, Q_{2}, \ldots, Q_{r}$ are jointly independent given $i^{0}$.
\end{proposition}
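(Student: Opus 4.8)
The plan is to establish the result by conditioning, under $H_{0}$, on the discrete part of the \emph{entire} sample, $\bm{i} := (i^{1}, i^{2}, \ldots, i^{|n|})$; this is the operative reading of ``given $i^{0}$'' here, since conditioning on $\bm{i}$ also fixes the marginal counts $n_{\pas{j}}(i^{0}_{\pas{j}})$ and hence every $\df_{j}$. Under $H_{0}$ the conditional law of the continuous data factorises as in \eqref{eq:CGR}: writing $\bm{y}_{j} := (y_{j}^{1}, \ldots, y_{j}^{|n|})$, the conditional density of $(\bm{y}_{1}, \ldots, \bm{y}_{r})$ given $\bm{i}$ is $\prod_{j=1}^{r}\prod_{\ell=1}^{|n|} f(y_{j}^{\ell}\mid x_{\pas{j}}^{\ell})$. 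The entry point is that, by Proposition~\ref{prop:cont_before_disc} and the definition \eqref{eq:Bi} of a perfect numbering, every continuous parent of $Y_{j}$ carries a continuous index strictly smaller than $j$; since the $Q_{j}$ of \eqref{eq:Qj} depends on the data only through $\{(y_{j}^{\ell}, y_{\pas{j}}^{\ell}) \mid i^{\ell}_{\pas{j}} = i^{0}_{\pas{j}}\}$, it is a measurable function of $(\bm{i}, \bm{y}_{1}, \ldots, \bm{y}_{j})$ alone.

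The core step is to show that, for each $j$, $Q_{j}$ is independent of $(\bm{y}_{1}, \ldots, \bm{y}_{j-1})$ conditionally on $\bm{i}$. To do this I would condition \emph{additionally} on $(\bm{y}_{1}, \ldots, \bm{y}_{j-1})$. Then the values of all continuous parents of $Y_{j}$ at every observation (the candidate outlier $z = x^{|n|}$ included) are frozen, and by the factorisation above $\bm{y}_{j}$ is a vector of independent Gaussians with means $\mu_{j}(x^{\ell})$ linear in those frozen parent values and variances $\sigma_{j}^{2}(i^{\ell})$. Restricting to the cell $i^{0}_{\pas{j}}$ — in which $z$ lies and all these variances equal $\sigma_{j}^{2}(i^{0})$ — we are in precisely the homoscedastic linear-normal set-up used to obtain \eqref{eq:Qj_dist}, with the parameter sets $E_{0,j}$ and $E_{j}$ differing by the single free parameter $\lambda_{j}$; Cochran's theorem \citep{cochran1934distribution} then gives, conditionally on $(\bm{i}, \bm{y}_{1}, \ldots, \bm{y}_{j-1})$, that $Q_{j} \sim \text{Beta}(\df_{j}/2, 1/2)$. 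Since this conditional law depends on the conditioning only through $\df_{j}$, which is a function of $\bm{i}$ alone, the conditional law of $Q_{j}$ given $(\bm{i}, \bm{y}_{1}, \ldots, \bm{y}_{j-1})$ equals its conditional law given $\bm{i}$ — which is the asserted conditional independence.

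To conclude, observe that $Q_{1}, \ldots, Q_{j-1}$ are, given $\bm{i}$, measurable functions of $(\bm{y}_{1}, \ldots, \bm{y}_{j-1})$, so the previous step delivers that $Q_{j}$ is independent of $(Q_{1}, \ldots, Q_{j-1})$ conditionally on $\bm{i}$ for every $j$. A routine induction — peeling off the factor $a_{r}(Q_{r})$ from the joint expectation $\mathbb{E}[\prod_{j=1}^{r}a_{j}(Q_{j}) \mid \bm{i}]$ by conditioning on $\bm{y}_{1}, \ldots, \bm{y}_{r-1}$, pulling out the remaining factors (which are then functions of $\bm{y}_{1}, \ldots, \bm{y}_{r-1}$), and repeating — factorises this expectation as $\prod_{j=1}^{r}\mathbb{E}[a_{j}(Q_{j}) \mid \bm{i}]$ for all bounded measurable $a_{1}, \ldots, a_{r}$; equivalently, $Q_{1}, \ldots, Q_{r}$ are jointly independent given $\bm{i}$, hence given $i^{0}$.

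The main obstacle is the second step: one must be sure that enlarging the conditioning from $\bm{i}$ alone to $(\bm{i}, \bm{y}_{1}, \ldots, \bm{y}_{j-1})$ does not perturb the exact Beta law of $Q_{j}$. This is precisely where the perfect-numbering order of Proposition~\ref{prop:cont_before_disc} does the work: it guarantees that everything entering $\mu_{j}$ is \emph{already} determined by that larger conditioning, so within the cell $i^{0}_{\pas{j}}$ the data still form a homoscedastic Gaussian linear model with two nested mean spaces whose dimensions differ by one, and Cochran's theorem applies verbatim — the actual values of the (now deterministic) regression coefficients and of the conditioning variables being irrelevant to the law of the residual-sum-of-squares ratio $Q_{j}$. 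A secondary point worth making explicit is that ``given $i^{0}$'' has to be understood as conditioning on the whole discrete sub-sample (equivalently, on the cell counts): this is what renders each $\df_{j}$ non-random and the independence statement well posed.
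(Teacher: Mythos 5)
Your proof is correct and follows essentially the same route as the paper's own (very terse) argument: the conditional law of $Q_{j}$ given the preceding data is $\text{Beta}(\df_{j}/2,1/2)$ with parameters depending only on $\df_{j}$, a function of the discrete data alone, whence the conditional independence of $Q_{j}$ from its predecessors, iterated over $j$. You have merely made explicit what the paper leaves implicit — in particular that one must condition on the full columns $\bm{y}_{1},\ldots,\bm{y}_{j-1}$ (not just the new observation's entries $x^{0}_{1},\ldots,x^{0}_{j-1}$) so that $Q_{1},\ldots,Q_{j-1}$ become measurable functions of the conditioning variables, and that ``given $i^{0}$'' is operatively conditioning on the whole discrete sub-sample so that each $\df_{j}$ is non-random.
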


\begin{proof}
 Notice first, that the distribution of $Q_{j}$ in \eqref{eq:Qj_dist} is conditional on $x^{0}_{1}, \ldots, x^{0}_{j-1}$. But since the distribution only depends on $\df_{j}$, we conclude that $Q_{j}$ is independent of $Q_{1}, Q_{2}, \ldots, Q_{j-1}$ given $i^{0}$. Repeating this argument for $j = r, r-1, \ldots, 2$, the result follows.
\end{proof}

\subsection{A Note on Studentized Residuals}
\label{sec:stud_res}
For each $j$, the ratio $Q_{j}$ can be used for an outlier test on data that conforms with $i^{0}_{\pas{j}}$ using studentized residuals. Recall, that a studentized residual is of the form
\begin{align*}\label{eq:r}
  % http://www.biostat.jhsph.edu/~iruczins/teaching/jf/ch7.pdf
  r_{j}^{0} = \frac{y_{j}^{0} - \tilde \mu_{j}(z)}{\sqrt{\tilde \sigma_{j}^{2}(i^{0})(1 - h_{j}^{0})}} \sim t_{\df_{j}},
\end{align*}
where $h_{j}^{0}$ is the so called leverage, which is the
$j$'th diagonal element of the hat matrix, and where
$\tilde \sigma^{2}$ and $\tilde \mu$ are the estimates under the
alternative hypothesis, i.e.\ excluding $z$. Let $f =
(r_{j}^{0})^{2}$. Then, $f \sim F_{1, \df_{j}}$, and since the
Beta distribution is mirror-symmetric, we obtain
\begin{align*}
 Q_{j} := 1 - \frac{f/\df_{j}}{1 + f/\df_{j}} \sim \text{Beta}(\df_{j}/2, 1/2).
\end{align*}
Hence, the contribution of the $j$'th ratio $Q_j$ in \eqref{eq:dev} is large when $|r_{j}^{0}|$ is large, i.e.\ when $y_{j}^{0}$ is deviating from the expectation under $H_{0}$ in cell $i_{\pas{j}}^{0}$.

\subsection{The Homogeneous Case}
\label{sec:hom}
In the homogeneous case, the conditional variance of $Y_{j}$ given $X_{\pas{j}} = (i_{\pas{j}}, y_{\pas{j}})$ is assumed to be independent of the discrete parents. That is, it is assumed that $\sigma_{j}^{2}(i) = \sigma^{2}_{j}$ for all $i\in \mathcal{I}_{\pas{j}}$. It follows that the maximised likelihood function in \eqref{eq:max_likelihood} reduce to
\begin{align*}
\prod_{j = 1}^{r}  (\hat \sigma_{j}^{2})^{-|n|/2} \times \exp\{ -|n|/2\} \times \prod_{i \in \mathcal{I}}\hat p(i)^{n(i)},
\end{align*}
where 
\begin{equation}\label{eq:mu_hom}
  \hat \sigma_{j}^{2} = \frac{1}{|n|} \sum_{i \in \mathcal{I}_{\pas{j}}} \sum_{k\in\eta(\pas{j})}\bigl(y_{j}^{k} - \hat \mu_{j}(x^{k})\bigr)^{2} \qquad \text{and} \qquad \hat \mu_{j}(x^{k}) = \hat \alpha_{j}(i^{k}) + \hat \beta_{j}^{T}y_{\pas{j}}^{k}.
\end{equation}
Notice, that $\hat \beta_{j}$ does not depend on any cells, since otherwise the marginal variance would not be independent of the discrete parents. Under the null hypothesis, the likelihood ratio now takes the form
\begin{align*}
  \prod_{j=1}^{r}\biggl( \frac{\hat \sigma_{j}^{2}}{\hat \sigma_{j,0}^{2}} \biggr)^{-|n|/2} \times Q_{D}.
\end{align*}
Let $Q^{h}_{j} = \hat \sigma_{j}^{2} / \hat \sigma_{j,0}^{2}$, and define $\dfh_{j} = |n| - |\pas{j} \cap \Gamma| - |\mathcal{I}^{+}_{\pas{j}}|$, where $\mathcal{I}^{+}_{\pas{j}}$ is the non-zero cells in $\mathcal{I}_{\pas{j}}$, $j=1,\dots,r$. Then
\[
Q^{h}_{j} \sim \text{Beta}(\dfh_{j}/2, 1/2),
\]
when $\dfh_{j} > 0$. In the case where $\pas{j} \cap \Delta = \emptyset$, the degrees of freedom coincide in the homogeneous and inhomogeneous case, such that $\dfh_{j} = \df_{j} = |n| - |\pas{j} \cap \Gamma| - 1$.

\subsection{Evaluating Deviances}
\label{sec:est_pars}
In order to evaluate the deviance, $D(z)$, for a new observation, $z$, in the inhomogeneous case, one must compute the variance estimates $\hat \sigma_{j,0}^{2}(i^{0})$ and $\hat \sigma_{j}^{2}(i^{0})$ to obtain $Q_{j}$ for $j = 1, 2, \ldots, r$. Similarly, in the homogeneous case, the estimates $\hat \sigma_{j}^{2}$ and $\hat \sigma_{j,0}^{2}$ must be computed to obtain $Q_{j}^{h}$ for $j = 1, 2, \ldots, r$. In the following, we give efficient methods for the calculations.

\subsubsection{Inhomogeneous Case}
\label{sec:inhom}
A natural way of estimating the variances in the inhomogeneous case is by fitting two linear regression models, one under the null hypothesis and one under the alternative hypothesis. Exploiting the connection to studentized residuals, it is only required to fit a single linear regression model under the alternative hypothesis (i.e.\ excluding $z$) and then calculate the quantities $Q_{j} = 1 - (f/\df_{j})/(1+f/\df_{j})$ as explained in Section~\ref{sec:stud_res}.

\subsubsection{Homogeneous Case}
\label{sec:inhom}
In order to estimate the variances in the homogeneous case using linear regression, $(|\pas{j} \cap \Gamma| + |\mathcal{I}^{+}_{\pas{j}}|)\times (|\pas{j} \cap \Gamma| + |\mathcal{I}^{+}_{\pas{j}}|)-$dimensional matrices must be inverted. Such inversions can be expensive even when $|\pas{j}|$ is small since $|\mathcal{I}^{+}_{\pas{j}}|$ may be large if some of the discrete variables have many levels. However, it is not of interest to know the estimated mean parameters; these are only required to estimate the variances and hence calculate $Q^{h}_{j}$. We circumvent this problem by centring the observations. As a consequence, we only need to invert matrices of dimension $|\pas{j} \cap \Gamma| \times |\pas{j} \cap \Gamma|$. See Appendix~\ref{sec:var_est} for details.

\section{The Outlier Test}
\label{sec:outlier_test}

In this section, we summarise the results of the previous sections and
suggest a novel outlier detection procedure, ODMGM, using CGR models in
Algorithm~\ref{alg:odmgm}. We first reiterate the
method given in \cite{lindskououtlier} for simulating discrete cells
in Algorithm~\ref{alg:sim_cells}, which is needed in
Algorithm~\ref{alg:odmgm}. The method is based on a RIP ordering of
the cliques in a pure discrete graph and, exploiting, the chain rule
\[
  P(I = i) = P(I_{C_{1}} = i_{C_{1}}) \prod_{k=2}^{K}P(I_{C_{k}} = i_{C_{k}} \mid I_{S_{k}} = i_{S_{k}}),
\]
where the RIP ordering ensures that the cell value $i_{S_{k}}$ is
known, since it holds that $S_{k} \subset C_{j}$ for some $j < k$
(this is exploited in line~\ref{line:sk} of
Algorithm~\ref{alg:sim_cells}).

\begin{algorithm}[h]
\caption{Simulate Cells in Pure Discrete Decomposable Graphical Models}\label{alg:sim_cells}
\begin{algorithmic}[1]
  \Procedure{}{$G$: Pure discrete decomposable graph. $U$: Dataset of  observations}
  \State Form the contingency table $n$ of all observations in $U$
  \State Construct a sequence of cliques, $C_{1}, C_{2}, \ldots, C_{K}$, having RIP from $G$
  \State Let $i := \{\}$ be an ordered list
  \State Simulate $i_{C_{1}}$ using the probability table $n_{C_{1}}(i_{C_{1}}) / |n|$ and append $i_{C_{1}}$ to $i$
    \For{$k = 2, 3, \ldots, K$}
    \State Simulate $i_{C_{k}\setminus S_{k}}$ using the conditional probability table $n_{C_{k}}(i_{C_{k}\setminus S_{k}}, i_{S_{k}}) / n_{S_{k}}(i_{S_{k}})$ \label{line:sk}
    \State Append $i_{C_{k}\setminus S_{k}}$ to $i$
  \EndFor 
 \EndProcedure
\end{algorithmic}
\end{algorithm}

\begin{algorithm}[h]
\caption{Outlier Detection in Mixed Graphical Models (ODMGM)}\label{alg:odmgm}
\begin{algorithmic}[1]
  \Procedure{}{$G$: Decomposable mixed graph, $U$: Dataset of observations, $z$: New observation.}
  \State Append $z$ to $U$ and form the contingency table $n$ of all observations
  \State Find a perfect ordering of the vertices in $G$
  \State Construct the pure graph $G_{\Delta}$
  \For{$\ell = 1,2,\ldots, N$}
    \State Simulate cell $i^{\ell}$ by applying Algorithm~\ref{alg:sim_cells} on $G_{\Delta}$
      \For{$j = 1,2, \ldots, r$}
         \If{$\df_{j}\geq 0$}
         \State simulate $Q_{j}$ from $\text{Beta}(\df_{j}/2, 1/2)$  \label{line:qj}
         \EndIf
      \EndFor 
    \State Calculate $D(x^{\ell})$ by applying \eqref{eq:disc_likelihood} to cell $i^{\ell}$ and add it to $-\sum_{j:\df_{j} \geq 0} n_{\pas{j}}(i^{\ell}_{\pas{j}})\log(Q_{j})$ \label{line:dx}
    \EndFor
    \State Define the empirical CDF, $F(x) = N^{-1}\sum_{\ell = 1}^{N}\mathbbm{1}[D(x^{\ell})\leq D(x)]$, and calculate $D(z)$ using \eqref{eq:dev}
    \If{$F(z) \geq 1 - \alpha$}
    \State declare $z$ as outlier in $U$ at an $\alpha-$level
    \EndIf
 \EndProcedure
\end{algorithmic}
\end{algorithm}

Notice that, in Algorithm~\ref{alg:odmgm} the new observation, $z$, is
appended to the data, $U$; i.e.\ under the null
hypothesis it is assumed that $z$ originates from the same generating
process as all the observations in $U$.  Next, due to the results in
\eqref{eq:Qj_dist}, it is not necessary to simulate the associated
continuous part of each simulated cell in order to simulate the
deviances. This implies a large reduction in the computational time
needed for simulation. As a consequence of Proposition~\ref{prop:qj_indep}, the quantities $Q_{j}$, can be computed in parallel due to conditional independence.

The homogeneous version follows by replacing $\df_{j}$ and $Q_{j}$
with their respective counterparts, $\dfh_{j}$ and $Q^{h}_{j}$, and
replacing all $n_{\pas{j}}(i_{\pas{j}})$ in line~\ref{line:dx} with
$|n|$.

\section{Real Data Example}
\label{sec:real}
% https://towardsdatascience.com/predicting-forest-cover-types-with-the-machine-learning-workflow-1f6f049bf4df
In this section, we apply ODMGM to the cover type (CT) data from the
UCI Machine Learning Repository \citep{dua2020}.  This dataset demands the usage of non-trivial models in order to capture the large
amount of information. This has caught the attention of
researchers in the machine learning community in order to benchmark
different classification models. Each sample in the data is taken from
a $30m \times 30m$ patch of forest that is classified as one of seven
CTs represented as integers: $1$: Spruce/Fir ($37\%$), $2$: Lodgepole
Pine ($48\%$), $3$: Ponderosa Pine ($6\%$), $4$: Cottonwood/Willow
($1\%$), $5$: Aspen ($2\%$), $6$: Douglas-fir ($3\%$) and $7$: Krummholz
($4\%$). In addition, the CT data contains $53$ explanatory variables
of which $44$ are discrete with two levels (i.e.\ binary). Of these,
$40$ describe the presence (or absence) of a particular soil type, and
four describes the presence (or absence) of the wilderness area. The
remaining variables are continuous and includes for example elevation, slope, horizontal distance to hydrology and
hillshade at noon. \citet{dua2020} gave a thorough explanation of the
entire dataset. The original dataset consists of $581,000$ samples,
however, we have down-sampled to $20,000$ samples to keep the CPU running
time down while preserving the frequency distribution of the CTs.

Recently, \cite{kumar2020classification} applied a random forest model
to obtain a classification accuracy of 95\% when predicting the CT of
a sample. We demonstrate, that classification should be conducted with
caution since, in many situations, more than a single CT is a
statistically plausible explanation of a sample. Furthermore, some
authors, e.g.\ \cite{zhiwei2017research}, assumed the explanatory
variables to be independent, which we show is an invalid exorbitant
assumption.

We start the analysis by fitting an interaction graph using the
\texttt{R} package \texttt{gRapHD} \citep{graphhd_edwards} to investigate the complexity of the CT data. The interaction graph, $G$, is
depicted in Figure~\ref{fig:interaction_graphs} (left), where white
vertices represent continuous variables, black vertices represent
discrete variables, and the grey vertex is the class variable. Clearly,
the explanatory variables are associated with CT either by a direct
relation or implicitly through other explanatory variables. The
interaction graph is rather complex and it is thus questionable to assume independence among all variables. There are four isolated variables, i.e.\ they are not connected to any other variable in the graph. We have removed these four variables (columns $21$, $22$, $50$ and $51$ in the UCI dataset)
to reduce the complexity.

In order to benchmark ODMGM as an outlier tool, we construct seven
interaction graphs, one for each class. Figure
\ref{fig:interaction_graphs} shows one of the more complex
interaction graphs for class $1$ ($G_1$, middle) and the simplest one for
class 7 ($G_{7}$, right). Notice, that these are quite different
implying that samples from different classes are, most likely, generated
by different mechanisms with intrinsic association differences among
the explanatory variables.

In Section~\ref{sec:pm}, we use these interaction graphs to calculate
the proportion of samples that ODMGM is able to declare as outliers
in each class. We benchmark the results to those of iForest. In the
following section, we investigate if the underlying assumptions of the
CGR model in class $7$, which was chosen for simplicity
due to its simpler interaction structure compared to the other
classes, is valid.

\begin{figure}[h!] 
  \centering
  \begin{minipage}[h]{.32\linewidth}
    \centering
    {\includegraphics[bb={90 80 440 430},clip=true,scale=0.30]{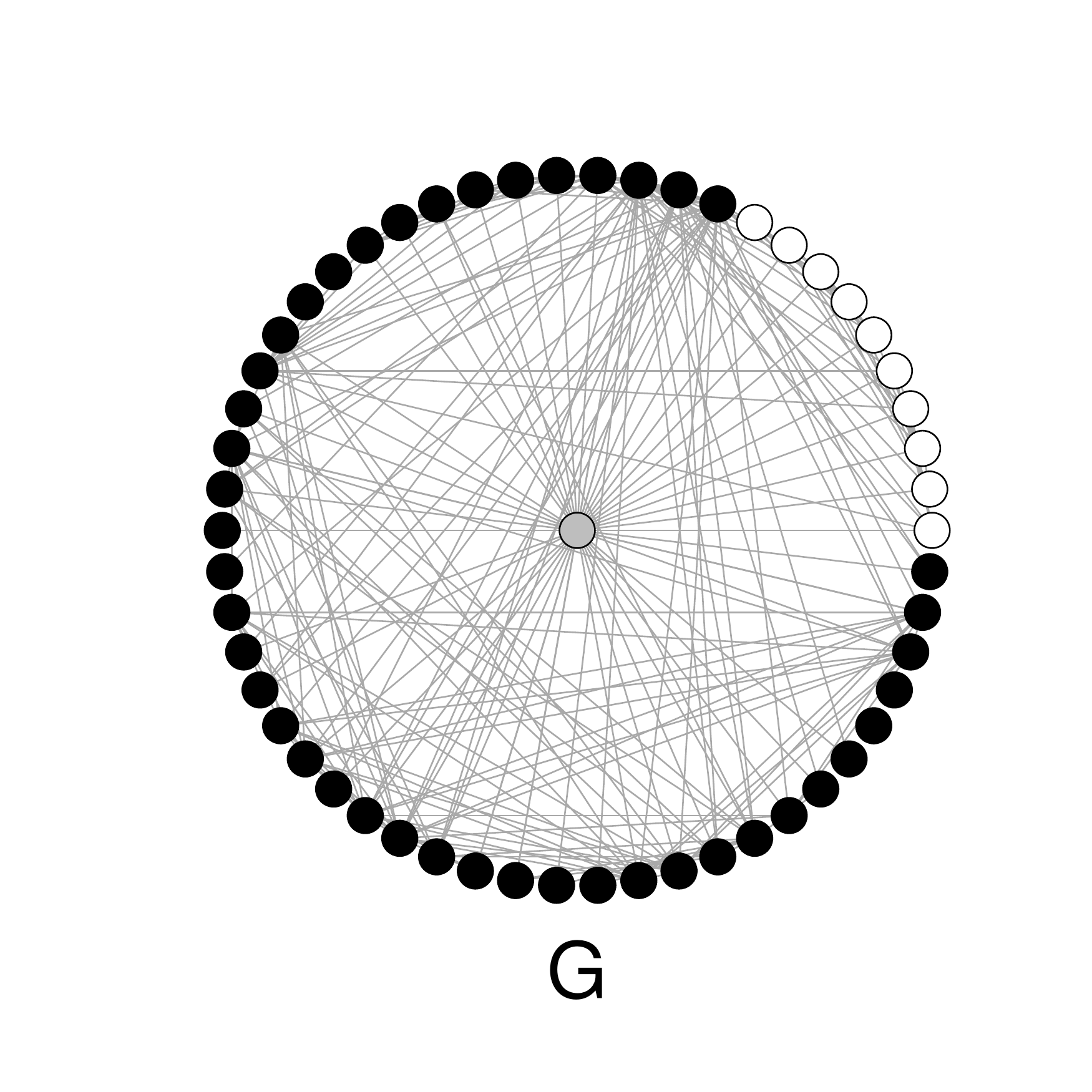}}\\
    $G$
  \end{minipage}
  \begin{minipage}[h]{.32\linewidth}
    \centering
    \includegraphics[bb={90 80 440 430},clip=true,scale=0.30]{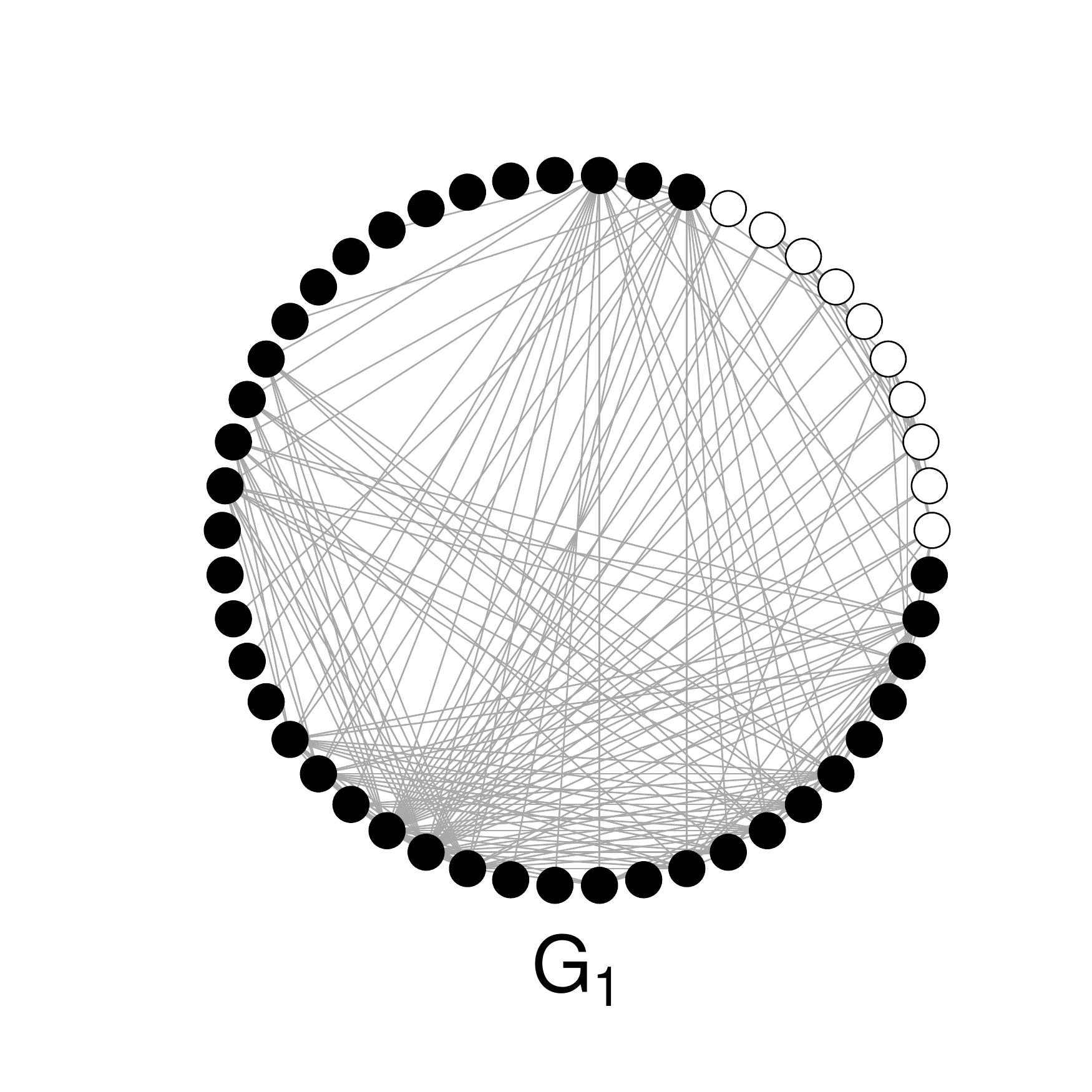}\\$G_1$ 
  \end{minipage}
  \begin{minipage}[h]{.32\linewidth}
    \centering
    \includegraphics[bb={90 80 440 430},clip=true,scale=0.30]{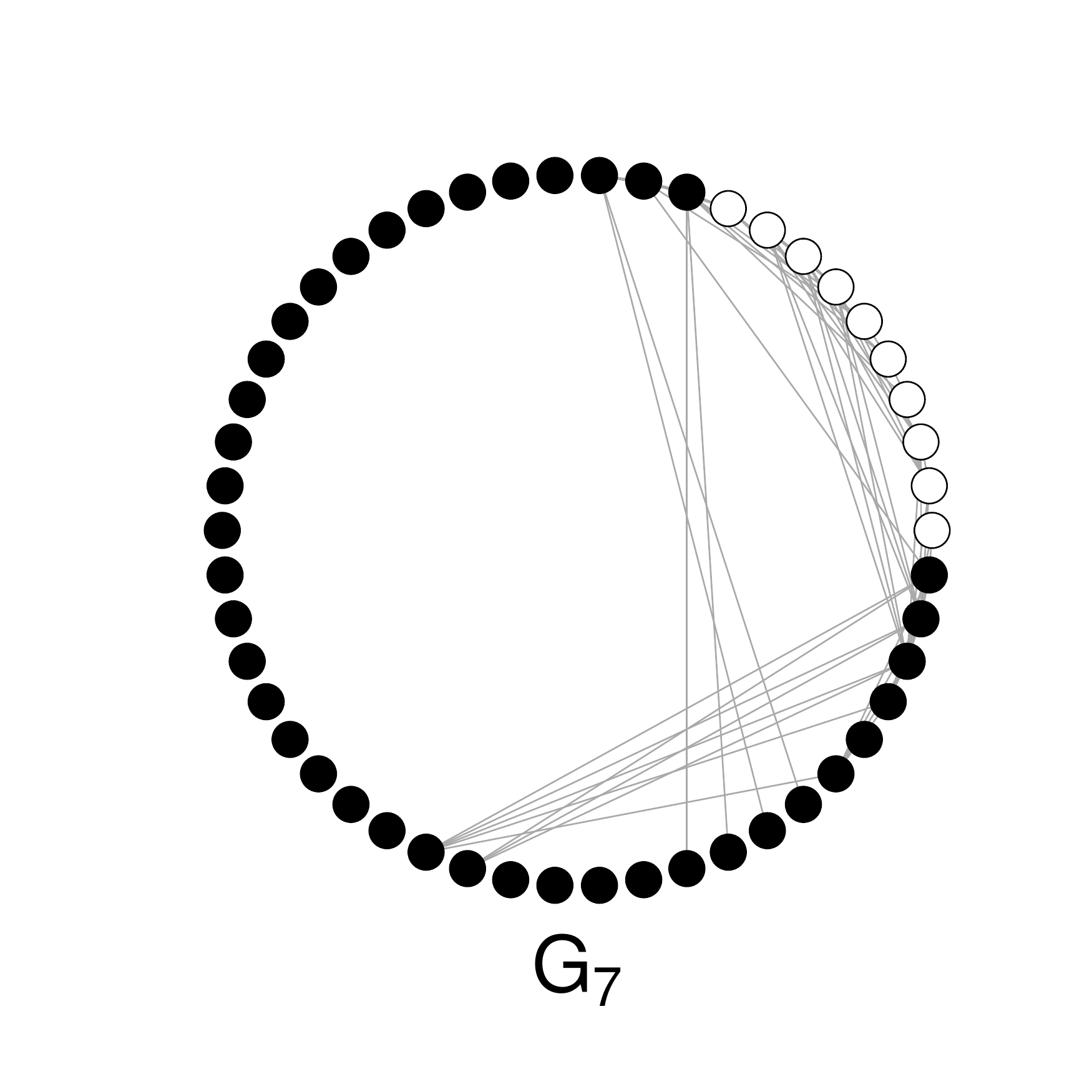}\\$G_7$
  \end{minipage}
    \caption{\label{fig:interaction_graphs}Left: Interaction graph, $G$,
    for the down-sampled CT data including the class variable. Middle:
    Interaction graph, $G_{1}$, for class 1. Right: Interaction graph,
    $G_{7}$, for class 7. Discrete variables are black,
    continuous variables are white and the central, grey
    vertex in the left graph represent the class variable.}
\end{figure}

\subsection{Verifying CGR Assumptions}
\label{sec:cgr_assumptions}
Consider the subgraph $H_{7}$ of $G_{7}$ in Figure~\ref{fig:subgraph7},
where we have named the vertices according the column position of the
corresponding variables in the cover type data.

\begin{figure}[h!]
  \centering
  \includegraphics[bb={90 80 460 450},clip=true,scale=0.4]{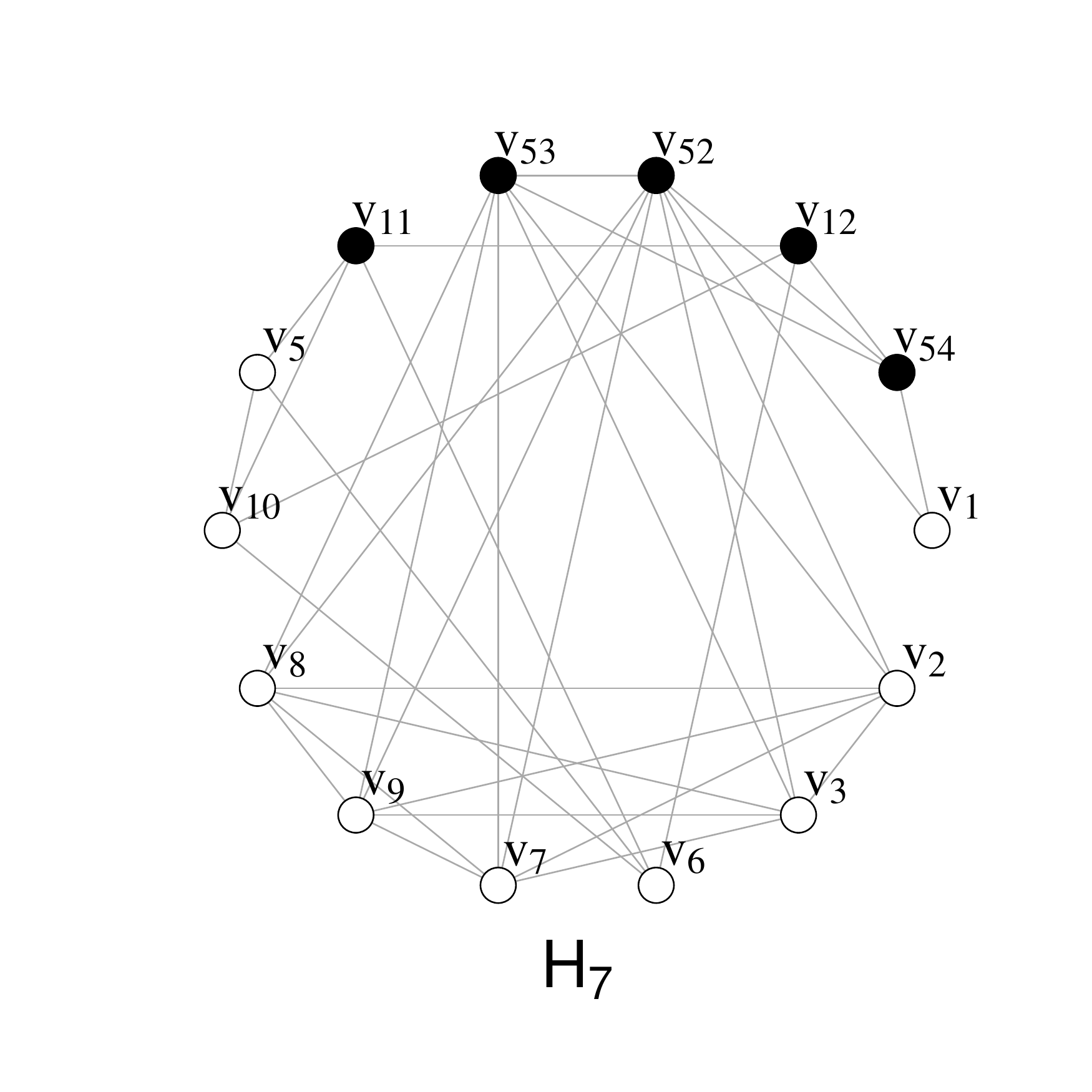}\\$H_7$
  \caption{\label{fig:subgraph7} The subgraph $H_{7}$ of $G_{7}$
    illustrating some of the associations between the CT variables for
    class $7$.}
\end{figure}

The subgraph $H_7$ in Figure~\ref{fig:subgraph7} consists of the
continuous variables and the corresponding discrete parents. Using MCS
together with Proposition~\ref{prop:cont_before_disc}, a perfect
numbering of the vertices can be computed such that
$\pa{v_1} = \{v_{52}, v_{54}\}$, $\pa{v_5} = \{v_6,v_{10}, v_{11}\}$,
$\pa{v_7} = \{v_2, v_3, v_{52}, v_{53}\}$,
$\pa{v_8} = \{v_2, v_3, v_7, v_{52}, v_{53}\}$,
$\pa{v_9} = \{v_2, v_3, v_7, v_8, v_{52}, v_{53}\}$ and
$\pa{v_{10}} = \{v_{11}, v_{12}\}$. First, we make a graphical check
for $v_1$ and $v_{10}$ being approximately Gaussian given their
parents, see Figure~\ref{fig:gaussian_given_disc}. In the light of a
rather complex model, the density plots in Figure
\ref{fig:gaussian_given_disc} look fairly symmetric and bell-shaped
for $v_{1}$ (top row, Figure~\ref{fig:gaussian_given_disc}). For
$v_{10}$, the densities are neither symmetric nor bell-shaped, but the
deviations from the Gaussian distribution are not large (bottom
row, Figure~\ref{fig:gaussian_given_disc}).

\begin{figure}[h!]
  \centering
  \includegraphics[scale=0.5]{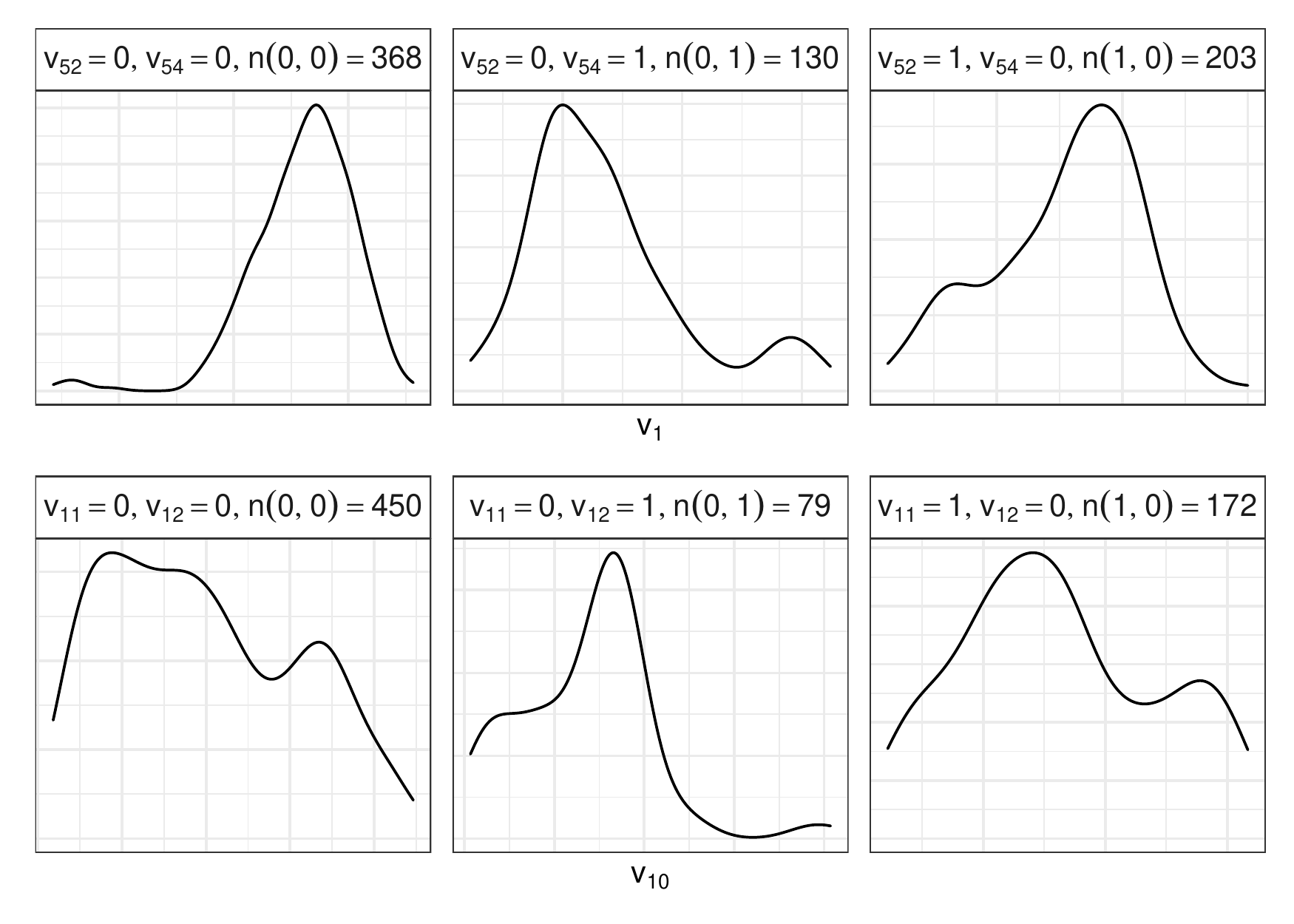}
  \caption{\label{fig:gaussian_given_disc}Density plots of $v_{1}$
    (top row) and $v_{10}$ (bottom row) given the configurations of
    their discrete parents. The panel headers indicate the level of
    the discrete parents, e.g.\ $v_{52} = 0, v_{54}=0$, and the numbers of observations, e.g.\ $n(0,0) = 368$.}
\end{figure}

It is difficult graphically to verify, whether $v_5, v_7, v_8$ and
$v_9$ are Gaussian with mean values depending on their parents, since
they all have more than one continuous parent. Instead, we shall assess
the adequacy of the assumptions simply by calculating the squared
coefficient of determination, $R^{2}$, for each combination of the
discrete parents. The results are summarised in Table~\ref{fig:r2},
where the numbers represent the values of $R^{2}$ for the given
configurations of $v_{52}$ and $v_{54}$. It can be noticed, that no
samples had the configuration $(v_{52}, v_{54}) = (1,1)$. The values
of $R^{2}$ are overall satisfactory. Notice, that the values for the
model of $v_{9}$ indicates a nearly perfect linear association. The
model of $v_{5}$ has $R^{2}-$values of $0.011$ and $0.258$ for
$v_{11} = 0$ and $v_{11} = 1$, respectively which is less impressive.

\begin{table}[h!]
  \centering
  \caption{Summary of model performance using the squared coefficient
    of determination, $R^2$.}
  \label{fig:r2}  
  \begin{tabular}{lllll} %% Replaced @{}lllll@{} and removed (1) -- (3)
    Model      & $(v_{52}, v_{54}):$ & $(0,0)$ & $(1,0)$ & $(0,1)$ \\ \midrule
    $v_{7} \sim v_{2} + v_{3}$                 & & $0.174$  & $0.580$  & $0.396$  \\
    $v_{8} \sim v_{2} + v_{3} + v_{7}$         & & $0.575$  & $0.405$  & $0.599$  \\
    $v_{9} \sim v_{2} + v_{3} + v_{7} + v_{8}$ & & $0.992$  & $0.998$  & $0.988$  \\ \bottomrule
  \end{tabular}
\end{table}

\subsection{Performance}
\label{sec:pm}

We now apply both ODMGM and iForest to the CT data and summarise the
results in Figure~\ref{fig:odmgm_iforest}. Given a specific class, one of the facets, we calculate the
proportion of observations for all other classes that ODMGM and
iForest, respectively, are able to declare as outliers in that specific class. Proportions for ODMGM are shown by circles whereas
results for iForest are shown as filled dots. The grey bands
highlight the proportion of in-class outlier detection; proportions
in this band should optimally equal the significance level, here,
$0.05$. Notably, it is difficult to detect outliers in class $1$ and
$2$ regardless of which method is used. In general, though, ODMGM
outperforms iForest and iForest is in fact worse than random guessing
in the majority of the tests (many rejection fractions less than
$0.5$). Specifically, for class $7$ the difference in performance is
heavily pronounced.

\begin{figure}[h!]
  \centering
  \includegraphics[width=\textwidth]{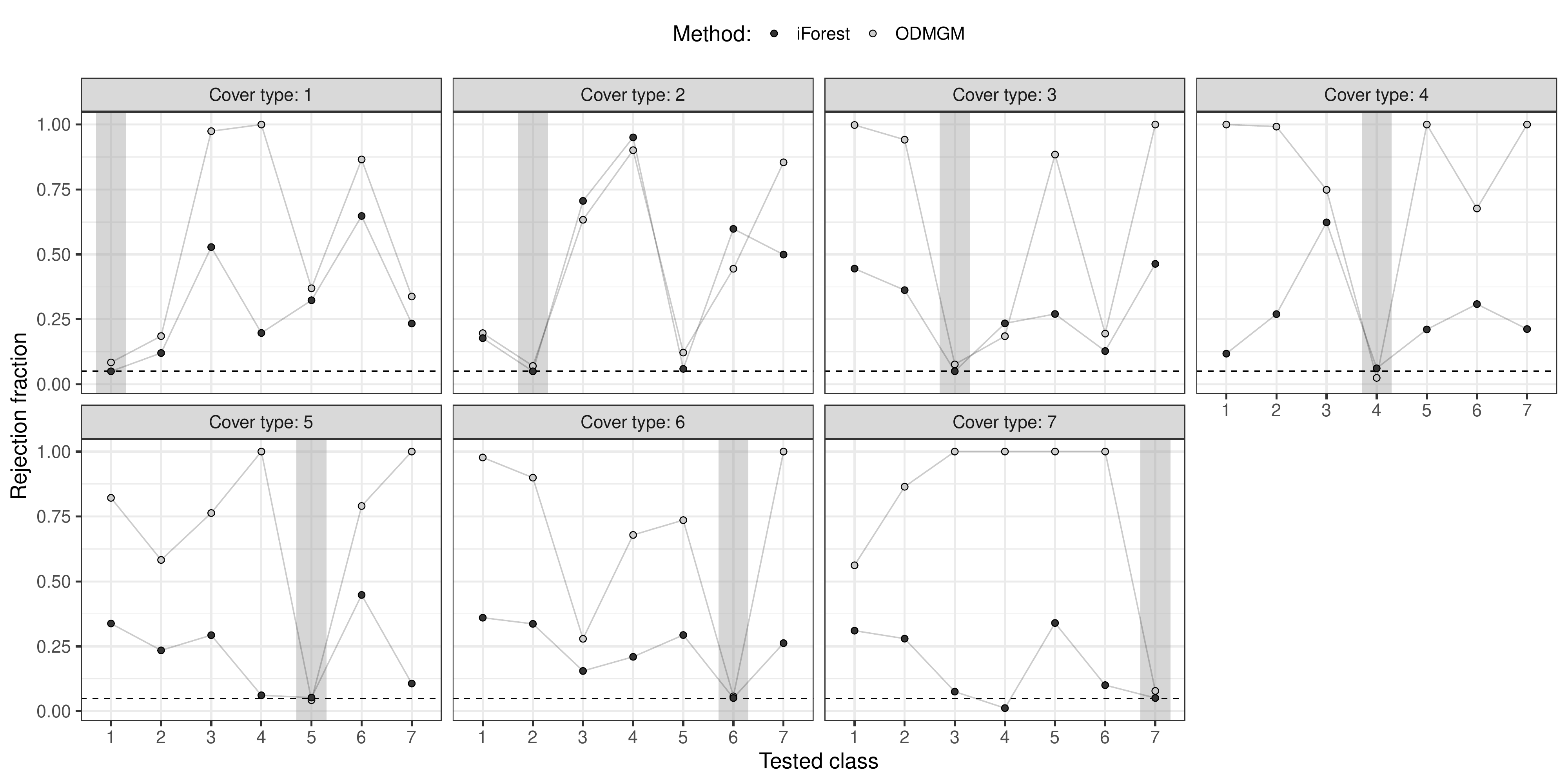} 
  \caption{\label{fig:odmgm_iforest}Each observation in the
    down-sampled dataset was tested as outlier in each class using both
    iForest (dark points) and ODMGM (light points), respectively. The
    panels show how often each of the cover types is rejected in the classes. The grey bands highlights the proportion of
    in-class outlier detection.}
\end{figure}

The presented methodology allows each observation to be tested as
outlier in each of the classes in a dataset. Consequently, an
observation can be declared an outlier in no, some, or all classes. In
the cover type dataset with seven classes, this implied that there may
be up to $2^7 = 128$ different rejection/acceptance combinations. In
Figure~\ref{fig:upsetr}, we plotted the different types of
combinations seen in the down-sampled dataset \citep[created by the
\texttt{UpSetR} \texttt{R}-package,][]{upsetr}. There are 819
observations that were rejected in all classes and $1,784$ observations
accepted in a single class. Furthermore, as expected from
Figure~\ref{fig:odmgm_iforest}, many observations are simultaneously
accepted in both class $1$ and $2$ (e.g.\ $5,479$ in just those two, 
and $6,931$ with an additional class (class $5$: $3,639$, class $7$:
$3,151$, and class $6$: $141$, respectively). The aggregation in
Figure~\ref{fig:upsetr} does not take the true class into
consideration as in Figure~\ref{fig:odmgm_iforest}. However, since the
majority of the observations are of class 1 or 2 (85\%), these also belong to
the classes in which most observations are accepted. Furthermore 77\% ($n = 15,365$) and 80\% ($n = 16,078$) were
accepted as being of class 1 and 2, respectively. This emphasises the
risk of assigning an observation to a single class, which is the
typical approach in a classification setup.

\begin{figure}[hb!]
  \centering
  \includegraphics[width=\textwidth]{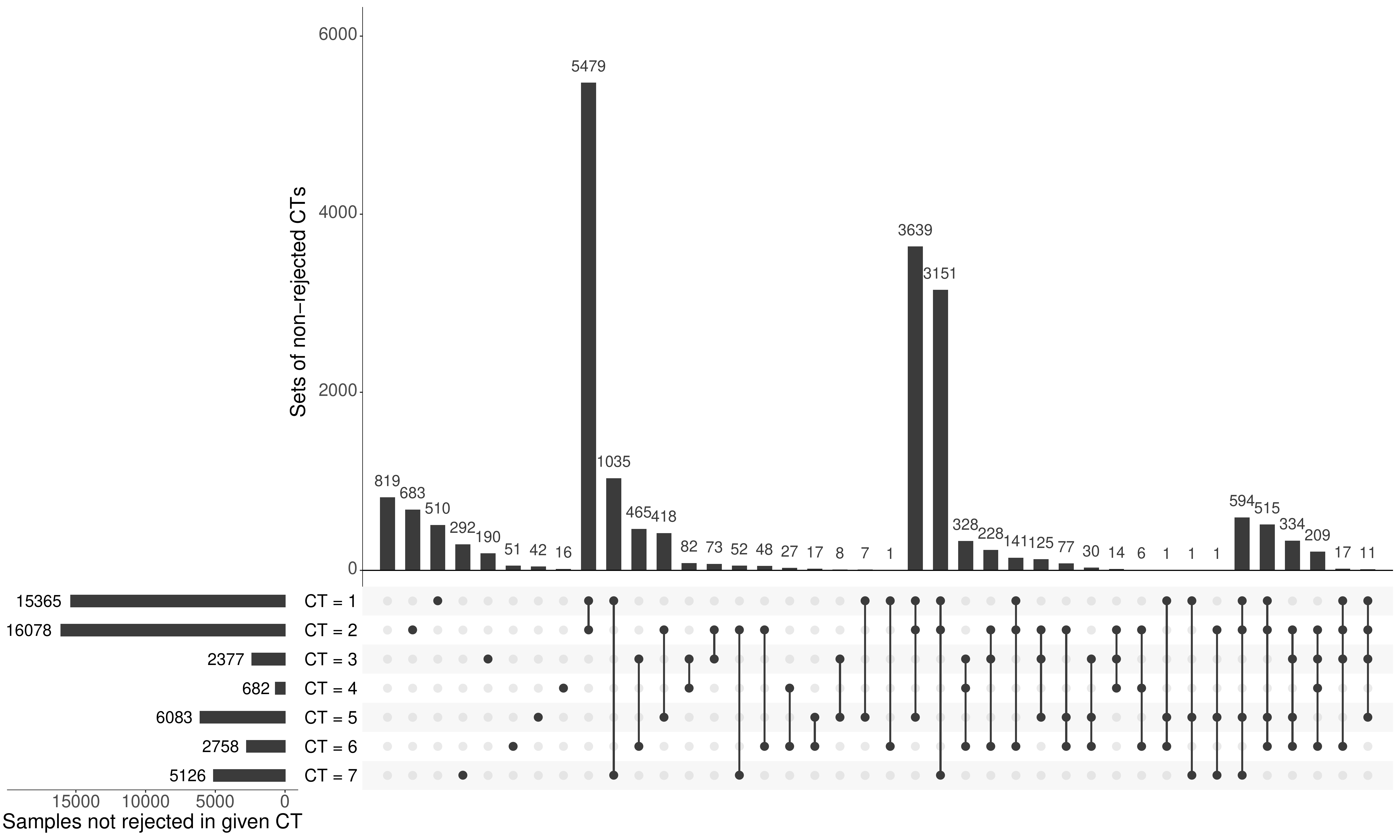}
  \caption{\label{fig:upsetr}Diagram showing the distribution of the various
    combinations of accepted classes for the down-sampled CT
    dataset. Dots (and lines) in the lower part correspond to the
    accepted CT classes \citep[created by the \texttt{UpSetR}
    \texttt{R}-package,][]{upsetr}.}
\end{figure}

\section{Conclusions and Future Work}
In this paper, we present a new probabilistic method, ODMGM, for
outlier detection in high-dimensional data with mixed variables. The
methodology uses a theoretically sound formulation of what is meant by
an outlier. We studied the performance of ODMGM on a real data set and
benchmarked it with the performance of the state-of-the-art algorithm
iForest. We found that ODMGM was superior to iForest and that iForest,
in general, is worse than random guessing for outlier detection for
this particular dataset. This contradicts the findings of several
other authors, see e.g.\ \cite{eiras2019large, xu2019mix,
  aryal2016revisiting, aryal2019improved, domingues2018comparative,
  emmott2015meta}. Furthermore, we saw from Figure~\ref{fig:upsetr}
that in many cases, it is not, statistically, possible to assign a
given sample to a single class. This is in contrast to classification
methods where, if a sample is plausible to originate from two or more
different classes, the method assign the sample to the most probable
class, which may arguably be undesirable in healthcare and forensics
e.g.\ where the cost of a false positive may be fatal. Thus, we
suggest an outlier detection method like ODMGM, and leave the further
investigation to a specialist if it is plausible for a sample to
belong to several classes or to be excluded from all classes.  The
latter case may reveal a new interesting finding. 

Furthermore, we provide software for use in the R language \citep{mlindsk_2020_3999522} together with all code snippets used
to generate all the results in this paper.

% Furthermore, we provide generic software for use in the R language \citep{mlindsk_2020_3999522} together with all code snippets used
% to generate all the results in this paper.

Another approach in the homogeneous case, which we hope to investigate
in future research is to assume a different parameterisations of the
conditional mean value in \eqref{eq:mu_hom}. Consider a generic
continuous variable, $y$, and suppose the discrete parents consist of
$i = (i_{1}, \ldots, i_{k})$. The conditional mean of $y$ could then, for example, only include main effects, i.e.
\begin{align*}
  \alpha_{1}(i_{1}) + \alpha_{2}(i_{2}) + \cdots + \alpha_{k}(i_{k}) + \beta^{T}y_{\pa{y}}.
\end{align*}
In this setup, we only require $|n| \geq \sum_j^k |\mathcal{I}_j| - k + |\pa{y} \cap \Gamma| + 1$. All though much simplified, the estimates would be more robust and the model, if appropriate, will have more power. 

Learning a graphical model from high dimensional data is a notoriously hard task, not least because of the many possible structures and the vast amount of data. However, it is more tangible if the graph is assumed to be decomposable since the computational advantages of such an assumption are tremendous. One of the most promising approaches was suggested by \cite{deshpande01_efficient} which offered a detailed algorithm, named ESS, for efficient stepwise model selection in mixed graphical models (including the pure case) is given. \cite{altmuelleril_pract} discovered a flaw in ESS and gave a proof for the correction. For the pure discrete case the ESS algorithm is implemented in the \texttt{R} software package \texttt{ess}, originally a part of the \texttt{molic} package \citep{lindskoumolic}. The ESS algorithm is not yet implemented to handle the mixed case in any known software to our knowledge. The \texttt{R} package \texttt{gRapHD} \citep{edwards10_selec_high_dimen_mixed_graph} was designed for model selection in high-dimensional mixed graphical models. Unfortunately, the package is no longer maintained. To our knowledge, the only maintained \texttt{R} package for model selection in the mixed case is the \texttt{mgm} package. However, one must specify the highest order of interaction in advance and even for small orders the procedure is much too slow for model selection in high-dimensional data. In connection to outlier detection where the procedure may need to run several times, it is crucial that fitting the interaction graph can be done reasonably fast. We plan to implement the ESS procedure for mixed graphs in the future. 

It is well-known, that linear regression models are not robust when data is contaminated with outliers \citep{yu2017robust}. The robustness of the parameter estimators in linear regression is often characterised by the breakdown point which indicates the proportion of outliers that the estimators can resist. It can be shown, that the breakdown point of OLS estimates is $1/|n|$ which tends to zero when the sample size $|n|$ increases. Since the estimates in ODMGM are calculated within sub-tables with $n_{\pas{j}}(i^{0}_{\pas{j}})$ observations, the effective sample size is markedly decreased. Hence, in a way, ODMGM is more robust against outliers compared to a global outlier test that uses all $|n|$ observations for parameter estimation. Practical computations of robust estimates are challenging and therefore increases the computational time. We hope to explore the issue of robustness in connection to ODMGM in more detail in the future to make the method more robust.

\appendix

\section{Variance Estimation for Inhomogeneous Models}
\label{sec:var_est}
First, we need a little more notation, and to ease this, we define $a:=\pas{j}$. Denote by $y_{j}^{k}(i_{a})$ the $k$'th observation of $y_{j}$ in cell $i_{a}$, i.e.\ the observation of $y_{j}$ corresponding to the $k$'th index in $\eta(a)$. Similarly, denote by $y_{a}^{k}(i_{a})$ the $k$'th observation of $y_{a}$ in cell $i_{a}$. The centred observations in cell $i_{a}$ is then given as

\[
  \gamma_{j}^{k}(i_{a}) :=  y_{j}^{k}(i_{a}) - \bar y_{j}(i_{a}), \quad \text{where} \quad \bar y_{j}(i_{a}) = \sum_{k\in\eta(a)} y_{j}^{k}(i_{a}),
\]
and 
\[
  \gamma_{a}^{k}(i_{a}) :=y_{a}^{k}(i_{a}) - \bar y_{a}(i_{a}), \quad \text{where} \quad \bar y_{a}(i_{a}) = \sum_{k\in\eta(a)} y_{a}^{k}(i_{a}).
\]
Our goal is to minimise the sums of squared errors
\[
  \sse_{j}(\alpha, \beta) = \sum_{i_{a}\in \mathcal{I}_{a}} \sum_{k\in\eta(a)}\bigl(y_{j}^{k}(i_{a}) - \alpha(i_{a}) - \beta^{T}y_{a}^{k}(i_{a})\bigr)^{2}.
\]
Using the centred observations, it can be seen that 
\[
  \sse_{j}(\alpha, \beta) = \sum_{i_{a}\in \mathcal{I}_{a}} \sum_{k\in\eta(a)}\bigl(\gamma_{j}^{k}(i_{a})-\beta^{T}\gamma_{a}^{k}(i_{a})\bigr)^{2} + \sum_{i_{a}\in \mathcal{I}_{a}} \sum_{k\in\eta(a)}\bigl(\bar y_{j}(i_{a})- \alpha(i_{a}) - \beta^{T}\bar y_{a}(i_{a})\bigr)^{2}.
\]
Thus,  $\hat \alpha(i_{a}) = \bar y_{j}(i_{a}) - \beta^{T}\bar y_{a}(i_{a})$ and $\hat \beta = S_{a, a}^{-1}S_{a, j}$ where
\[
  S_{u,v} = \sum_{i_{a}\in \mathcal{I}_{a}} \sum_{k\in\eta(a)}\gamma_{u}^{k}(i_{a})\{\gamma_{v}^{k}(i_{a})\}^{T}.
\]
Finally, it follows that
\begin{align}\label{eq:sse}
  \sse_{j}(\hat \alpha, \hat \beta) = S_{j,j} - S_{j,a} S_{a,a}^{-1}S_{a,j} \quad \text {and} \quad \hat \sigma_{j} = |n|^{-1}\sse_{j}(\hat \alpha, \hat \beta).
\end{align}
The problem is now reduced to inverting a $(|\pas{j} \cap \Gamma|) \times (|\pas{j} \cap \Gamma|)$ matrix. The quantities $Q_{j}^{h}$ can thus be computed using \eqref{eq:sse} twice; one with the new observation included and one without. Notice that the minimised sums of squared errors reduces as follows in the special cases:
\begin{align}
  \sse_{j} = \begin{cases}
    S_{j,j} & \quad \text{when} \quad \pas{j} \subset \Delta \\
    \tilde S_{j,j} - \tilde S_{j,a} \tilde S_{a,a}^{-1}\tilde S_{a,j} & \quad \text{when} \quad \pas{j} \subset \Gamma\\
    \tilde S_{j,j} & \quad \text{when} \quad \pas{j} = \emptyset,
  \end{cases}
\end{align}
where $\tilde S_{u,v} = \sum_{\ell =1}^{|n|}\gamma_{u}^{\ell}(\gamma_{v}^{\ell})^{T}$ and $\gamma_{u}^{\ell} = y_{u}^{\ell} - \bar y_{u}$.

% \bibliographystyle{unsrt}
% \bibliography{references}

\begin{thebibliography}{26}
\providecommand{\natexlab}[1]{#1}
\providecommand{\url}[1]{\texttt{#1}}
\expandafter\ifx\csname urlstyle\endcsname\relax
  \providecommand{\doi}[1]{doi: #1}\else
  \providecommand{\doi}{doi: \begingroup \urlstyle{rm}\Url}\fi


\bibitem[{Yu and Yao(2017)}]{yu2017robust}
Yu C, Yao W (2017) Robust linear regression: A review and comparison.
  Communications in Statistics-Simulation and Computation 46(8):6261--6282
  
\bibitem[Altmueller and Haralick(2004)]{altmuelleril_pract}
Stephan~M Altmueller and Robert~M Haralick.
\newblock Practical aspects of efficient forward selection in decomposable
  graphical models.
\newblock In \emph{16th IEEE International Conference on Tools with Artificial
  Intelligence}, pages 710--715. IEEE, 2004.
\newblock \doi{10.1109/ictai.2004.100}.
\newblock URL \url{https://doi.org/10.1109/ictai.2004.100}.

\bibitem[Aryal et~al.(2016)Aryal, Ting, and Haffari]{aryal2016revisiting}
Sunil Aryal, Kai~Ming Ting, and Gholamreza Haffari.
\newblock Revisiting attribute independence assumption in probabilistic
  unsupervised anomaly detection.
\newblock In \emph{Pacific-Asia Workshop on Intelligence and Security
  Informatics}, pages 73--86. Springer, 2016.

\bibitem[Aryal et~al.(2019)Aryal, Baniya, and Santosh]{aryal2019improved}
Sunil Aryal, Arbind~Agrahari Baniya, and KC~Santosh.
\newblock Improved histogram-based anomaly detector with the extended principal
  component features.
\newblock \emph{arXiv preprint arXiv:1909.12702}, 2019.

\bibitem[Cochran(1934)]{cochran1934distribution}
William~G Cochran.
\newblock The distribution of quadratic forms in a normal system, with
  applications to the analysis of covariance.
\newblock In \emph{Mathematical Proceedings of the Cambridge Philosophical
  Society}, volume~30, pages 178--191. Cambridge University Press, 1934.

\bibitem[de~Abreu et~al.(2009)de~Abreu, Labouriau, and
  Edwards]{graphhd_edwards}
Gabriel~CG de~Abreu, Rodrigo Labouriau, and David Edwards.
\newblock High-dimensional graphical model search with graphd r package.
\newblock \emph{arXiv preprint arXiv:0909.1234}, 2009.

\bibitem[Deshpande et~al.(2001)Deshpande, Garofalakis, and
  Jordan]{deshpande01_efficient}
Amol Deshpande, Minos Garofalakis, and Michael~I Jordan.
\newblock Efficient stepwise selection in decomposable models.
\newblock In \emph{Proceedings of the Seventeenth conference on Uncertainty in
  artificial intelligence}, pages 128--135. Morgan Kaufmann Publishers Inc.,
  2001.

\bibitem[Domingues et~al.(2018)Domingues, Filippone, Michiardi, and
  Zouaoui]{domingues2018comparative}
R{\'e}mi Domingues, Maurizio Filippone, Pietro Michiardi, and Jihane Zouaoui.
\newblock A comparative evaluation of outlier detection algorithms: Experiments
  and analyses.
\newblock \emph{Pattern Recognition}, 74:\penalty0 406--421, 2018.

\bibitem[Dua and Graff(2017)]{dua2020}
Dheeru Dua and Casey Graff.
\newblock {UCI} machine learning repository, 2017.
\newblock URL \url{http://archive.ics.uci.edu/ml}.

\bibitem[Edwards(2012)]{edwards2012introduction}
David Edwards.
\newblock \emph{Introduction to graphical modelling}.
\newblock Springer Science \& Business Media, 2012.

\bibitem[Edwards et~al.(2010)Edwards, de~Abreu, and
  Labouriau]{edwards10_selec_high_dimen_mixed_graph}
David Edwards, Gabriel~CG de~Abreu, and Rodrigo Labouriau.
\newblock Selecting high-dimensional mixed graphical models using minimal aic
  or bic forests.
\newblock \emph{BMC Bioinformatics}, 11\penalty0 (1):\penalty0 18, 2010.
\newblock \doi{10.1186/1471-2105-11-18}.
\newblock URL \url{https://doi.org/10.1186/1471-2105-11-18}.

\bibitem[Eiras-Franco et~al.(2019)Eiras-Franco, Martinez-Rego,
  Guijarro-Berdinas, Alonso-Betanzos, and Bahamonde]{eiras2019large}
Carlos Eiras-Franco, David Martinez-Rego, Bertha Guijarro-Berdinas, Amparo
  Alonso-Betanzos, and Antonio Bahamonde.
\newblock Large scale anomaly detection in mixed numerical and categorical
  input spaces.
\newblock \emph{Information Sciences}, 487:\penalty0 115--127, 2019.

\bibitem[Emmott et~al.(2015)Emmott, Das, Dietterich, Fern, and
  Wong]{emmott2015meta}
Andrew Emmott, Shubhomoy Das, Thomas Dietterich, Alan Fern, and Weng-Keen Wong.
\newblock A meta-analysis of the anomaly detection problem.
\newblock \emph{arXiv preprint arXiv:1503.01158}, 2015.

\bibitem[Garchery and Granitzer(2018)]{garchery2018influence}
Mathieu Garchery and Michael Granitzer.
\newblock On the influence of categorical features in ranking anomalies using
  mixed data.
\newblock \emph{Procedia Computer Science}, 126:\penalty0 77--86, 2018.

\bibitem[Gehlenborg(2019)]{upsetr}
Nils Gehlenborg.
\newblock \emph{{UpSetR: A More Scalable Alternative to Venn and Euler Diagrams
  for Visualizing Intersecting Sets}}, 2019.
\newblock URL \url{https://CRAN.R-project.org/package=UpSetR}.
\newblock R package version 1.4.0.

\bibitem[Hawkins(1980)]{hawkins1980identification}
Douglas~M Hawkins.
\newblock \emph{Identification of outliers}, volume~11.
\newblock Springer, 1980.

\bibitem[Kumar and Sinha(2020)]{kumar2020classification}
Arvind Kumar and Nishant Sinha.
\newblock Classification of forest cover type using random forests algorithm.
\newblock In \emph{Advances in Data and Information Sciences}, pages 395--402.
  Springer, 2020.

\bibitem[Lauritzen(1996)]{lauritzen1996graphical}
Steffen~L Lauritzen.
\newblock \emph{Graphical models}, volume~17 of \emph{Oxford Statistical
  Science Series}.
\newblock Clarendon Press, 1996.

\bibitem[Lauritzen and Wermuth(1989)]{lauritzen1989}
Steffen~Lilholt Lauritzen and Nanny Wermuth.
\newblock Graphical models for associations between variables, some of which
  are qualitative and some quantitative.
\newblock \emph{The annals of Statistics}, pages 31--57, 1989.

\bibitem[Leimer(1988)]{leimer1988triangulated}
H-G Leimer.
\newblock Triangulated graphs with marked vertices.
\newblock In \emph{Annals of Discrete Mathematics}, volume~41, pages 311--324.
  Elsevier, 1988.

\bibitem[Lindskou(2019)]{lindskoumolic}
Mads Lindskou.
\newblock {molic: An R package for multivariate outlier detection in
  contingency tables}, October 2019.
\newblock URL \url{https://doi.org/10.21105/joss.01665}.

\bibitem[Lindskou(2020)]{mlindsk_2020_3999522}
Mads Lindskou.
\newblock mlindsk/odmgm v1.1, August 2020.
\newblock \url{https://doi.org/10.5281/zenodo.3999522} and
  \url{https://github.com/mlindsk/odmgm}.

\bibitem[Lindskou et~al.(2019)Lindskou, Eriksen, and
  Tvedebrink]{lindskououtlier}
Mads Lindskou, Poul~Svante Eriksen, and Torben Tvedebrink.
\newblock Outlier detection in contingency tables using decomposable graphical
  models.
\newblock \emph{Scandinavian Journal of Statistics}, 2019.
\newblock \doi{10.1111/sjos.12407}.

\bibitem[Liu et~al.(2008)Liu, Ting, and Zhou]{liu2008isolation}
Fei~Tony Liu, Kai~Ming Ting, and Zhi-Hua Zhou.
\newblock Isolation forest.
\newblock In \emph{2008 Eighth IEEE International Conference on Data Mining},
  pages 413--422. IEEE, 2008.

\bibitem[Xu et~al.(2019)Xu, Wang, Wang, and Wu]{xu2019mix}
Hongzuo Xu, Yijie Wang, Yongjun Wang, and Zhiyue Wu.
\newblock Mix: A joint learning framework for detecting both clustered and
  scattered outliers in mixed-type data.
\newblock In \emph{2019 IEEE International Conference on Data Mining (ICDM)},
  pages 1408--1413. IEEE, 2019.

\bibitem[Yannakakis(1981)]{yannakakis1981computing}
Mihalis Yannakakis.
\newblock Computing the minimum fill-in is np-complete.
\newblock \emph{SIAM Journal on Algebraic Discrete Methods}, 2\penalty0
  (1):\penalty0 77--79, 1981.

\bibitem[Zhiwei et~al.(2017)Zhiwei, Xiaomin, Lin, Mengmeng, Zhikang, Yamin, and
  Jianhua]{zhiwei2017research}
Zhou Zhiwei, Guo Xiaomin, Wei Lin, Zhang Mengmeng, Lian Zhikang, Zhao Yamin,
  and Zhang Jianhua.
\newblock Research on search engine of knowledge adaptation system based on
  large scale data set.
\newblock \emph{Procedia engineering}, 174:\penalty0 308--316, 2017.

\end{thebibliography}

\end{document}